\documentclass[11pt,leqno]{article}

\usepackage[margin=1in]{geometry}
\usepackage{amsmath,amssymb,amsthm,mathtools}
\usepackage{enumerate}
\usepackage{graphicx}
\graphicspath{ {./figures/} }
\usepackage{subcaption}
\usepackage{xcolor}
\usepackage{cite}
\usepackage{hyperref}
\usepackage{mathrsfs}
\usepackage{authblk}

\newtheorem{theorem}{Theorem}[section]

\newtheorem{proposition}{Proposition}[section]

\theoremstyle{definition}

\newtheorem{remark}{Remark}[section]

\numberwithin{equation}{section}

\newcommand{\Lip}{\mathrm{Lip}}

\newcommand{\Li}{{L^\infty}}
\newcommand{\Lii}{{L^\infty([0,1])}}
\newcommand{\M}{\mathcal{M}}

\newcommand{\Mloc}{\mathcal{M}_{}^*}
\newcommand{\Nmon}{\mathcal{N}_{\mathsf{mon}}}
\newcommand{\Bezier}{B{\'e}zier }

\newcommand{\norm}[1]{\| {#1}\| }

\newcommand{\set}[1]{\left\{#1\right\}}
\newcommand{\void}{\varnothing}
\newcommand{\abs}[1]{\left|#1\right|}

\newcommand{\R}{\mathbb{R}}
\newcommand{\brac}[1]{\left(#1\right)}

\renewcommand{\epsilon}{\varepsilon}
\newcommand{\eps}{\epsilon}
\newcommand{\eqindent}{\displayindent0pt\displaywidth\textwidth}

\definecolor{blackscomment}{RGB}{189,113,57}

\title{Optimal $C^{1,1}$ and Quasi-Optimal $C^2$ Monotone Interpolation with Curvature Control}
\author[1]{Fushuai Jiang}
\author[2]{Garving K. Luli}
\affil[1]{Department of Mathematics, City University of Hong Kong}
\affil[2]{Department of Mathematics, University of California - Davis}
\date{}

\begin{document}

\maketitle

\begin{abstract}
We study monotone Hermite interpolation on an interval, where both function values and first derivatives are prescribed at the nodes. Among all $C^{1,1}$ interpolants, we seek one with optimal curvature, measured by $\|F''\|_{L^\infty}$. In this paper, we analyze the limitations of some classical techniques, and provide an explicit optimal construction in $C^{1,1}$ given by quadratic splines by studying the optimal velocity profile. 
Moreover, given $E = \{x_1,\cdots,x_N\}$ and $f:E\to \R$ (without derivatives), 
we also provide a formula to compute the corresponding trace seminorm
\[
\inf\Bigl\{
\|F''\|_{L^\infty}
:
F(x)=f(x) \text{ on $E$ and } 
F'\ge 0 \text{ everywhere}
\Bigr\}.
\]
In addition, we also describe how to mollify $C^{1,1}$ solutions to $C^2$ while preserving monotonicity and sacrificing a controlled amount of optimality. 
\end{abstract}

\section{Introduction}
Let $I\subset \R$ be a compact interval. We use $C^{1,1}(I) \cong W^{2,\infty}(I)$ to denote the function space of differentiable functions on $I$ with Lipschitz derivatives, equipped with the seminorm
\begin{equation*}
    \norm{F}_{\dot C^{1,1}(I)} := \norm{F''}_{\Li(I)}
\end{equation*}
where we interpret $F''$ in the sense of distribution. We use $C^2(I)$ to denote the vector space of twice continuously differentiable functions on (the interior of) $I$, equipped with the same seminorm as above.

Let $E = \set{x_i: i = 0, 1, \cdots, N}\subset \R$ with
$x_0 < x_1 < \dots < x_N$. Given Hermite data consisting of a monotone $f:E \to \R$ and its slope $f':E \to  [0,\infty)$, 
our goal is to construct a $C^{1,1}$ or $C^{2}$ Hermite interpolant \(F\) with small curvature measured in $\norm{F''}_{\Li}$. That is, given the first order constraints
\begin{equation}\label{condition1}
F(x) = f(x) \text{ and } F'(x) = f'(x) \text{\quad for all $x \in E$.}
\end{equation}
we seek a function $F: [x_0,x_N]\to \mathbb{R}$ satisfying \eqref{condition1} such that
\begin{equation}\label{condition2}
F'(x)\ge 0 \quad \text{for all } x\in [x_0,x_N],
\end{equation}
and
\begin{equation}
    \norm{F''}_{\Li([x_0,x_N])}
\le
C \cdot \inf \left\{
\norm{G''}_{\Li([x_0,x_N])} :
G \text{ satisfies } \eqref{condition1} \text{ and } \eqref{condition2}
\right\},
\label{eq:quasi-optimal def}
\end{equation}
where \(C\ge 1\) is a universal constant. We call such an $F$ satisfying \eqref{condition1}--\eqref{eq:quasi-optimal def} {\em quasi-optimal}, and if $C = 1$, we call it {\em optimal}. 

In this paper, we will explicitly construct optimal $C^{1,1}$ monotone increasing Hermite interpolants, and quasi-optimal $C^{2}$ monotone Hermite interpolants. Moreover, we will provide an explicit formula to compute the infimum in \eqref{eq:quasi-optimal def}. The monotone decreasing case is equivalent to the monotone increasing case by replacing $F$ with $-F$. 

Since we only work with a finite number of data points and our construction is always piecewise smooth, there is no substantial difference between working with $C^{1,1}$ function and $C^2$ functions, with the understanding that we can always mollify finitely many corners to upgrade a well-behaved $C^{1,1}$ solution to a $C^2$ solution while sacrificing a controlled amount of optimality; see Sections \ref{sect:reduction} and \ref{sect:C2 smoothing} for the smooth patching and local mollification arguments, respectively. From an optimization point of view, it is also convenient to work with $C^{1,1}$ due to its compactness property. As such, we will focus our discussion on $C^{1,1}$ solutions for the rest of the paper.

\subsection{Background and prior work}
Spline interpolation and smooth extension theory have a long history in approximation theory; see, for example, the foundational work of Birkhoff and de Boor and the classical monograph of de Boor \cite{BirkhoffdeBoor1965,deBoor1978}. Classical spline methods provide convenient and powerful tools for interpolation and approximation, but in general, they do not enforce geometric shape constraints such as monotonicity.

The literature on monotone interpolation has been dominated by local piecewise-polynomial constructions, especially piecewise cubic Hermite methods. The seminal work of Fritsch and Carlson introduced monotone piecewise cubic interpolation, and the method of Fritsch and Butland provided a simple local construction of monotone piecewise cubic interpolants \cite{FritschCarlson1980,FritschButland1984}. These methods are computationally effective and remain highly influential, but the resulting interpolants are generally only \(C^1\) with no further global control on the (distributional) second derivative. Related developments include Steffen's monotone interpolation method, Huynh's analysis of accurate monotone cubic interpolation, and Higham's local monotone piecewise cubic interpolation \cite{Steffen1990,Huynh1991,Higham1992}. More recent work includes nonlinear monotone Hermite constructions based on ENO (essentially non-oscillatory) or WENO (weighted essentially non-oscillatory) ideas and optimization-based monotone cubic spline frameworks \cite{ArandigaBaezaYanez2013,WolbergAlfy2002}.

A fundamental difficulty is that monotonicity is a global inequality constraint, while second-order smoothness imposes strong compatibility conditions across adjacent intervals. As a result, ordinary interpolating cubic splines, although naturally $C^2$, may violate monotonicity even for monotone data. Indeed, within the fixed-knot cubic-spline setting, interpolation, global $C^2$ regularity, and monotonicity are not simultaneously compatible in general; see, for example, the discussions in \cite{WolbergAlfy2002,Huynh1991}. Because of this, the classical cubic framework is too rigid to solve the full problem in the $C^{1,1}$ or $C^2$ setting.

To overcome this obstruction, several authors enlarged the admissible class of interpolants. One line of work studies quartic, quintic, or rational splines with additional shape parameters or extra degrees of freedom. In particular, Ulrich and Watson developed positivity conditions for quartic polynomials that underpin monotone higher-order constructions \cite{UlrichWatson1994}, while He\ss{} and Schmidt studied positive quartic and monotone quintic \(C^2\)-spline interpolation \cite{HessSchmidt1994}. Later work constructed monotonicity-preserving \(C^2\) rational cubic splines, unconditionally monotone \(C^2\) quartic spline methods, and monotone quintic spline algorithms \cite{AbbasMajidAli2012,YaoNelson2018,LuxWatsonChangXuWangHong2020}. These results show that \(C^2\) monotone interpolation is achievable in richer spline families, but typically through specialized ans\"atze, auxiliary parameters, or interval-by-interval admissibility conditions.

There is also a broader literature on shape-constrained and monotone regression splines, motivated in part by statistics and data analysis \cite{Ramsay1988,LeitenstorferTutz2007}. However, these methods typically address fitting or smoothing problems rather than exact Hermite interpolation with quantitative control on $F''$.

\subsection{Contribution and significance}
The result established in this paper is of a different nature from the classical shape-preserving spline literature. Rather than searching for a monotone interpolant within a prescribed spline family, we solve the full extension problem in the seminorms $\norm{F''}_{\Li}$. More precisely, we construct a globally smooth function satisfying the Hermite interpolation conditions \eqref{condition1}, preserving monotonicity on the whole interval as in \eqref{condition2}, and doing so with $\norm{F''}_{\Li}$ as small as possible among all admissible monotone interpolants.

This goes substantially beyond earlier monotone interpolation results. Previous methods were primarily designed to optimize locality, visual smoothness, polynomial reproduction, or membership in a specific spline class; see, for example,\cite{FritschCarlson1980,FritschButland1984,Higham1992,WolbergAlfy2002,ArandigaBaezaYanez2013,AbbasMajidAli2012,YaoNelson2018,LuxWatsonChangXuWangHong2020}. By contrast, our theorem gives a quasi-optimal extension result in the precise functional-analytic sense relevant to $C^2$ or $C^{1,1}$ interpolation. In particular, it yields not merely the existence of some monotone smooth interpolant, but one whose curvature is quantitatively near-minimal in the natural seminorm.

To the best of our knowledge, no previous result in the monotone Hermite interpolation literature establishes this combination of exact Hermite interpolation, 
global $C^{1,1}$ or $C^2$ regularity,
monotonicity, and 
optimal control of $\norm{F''}_{\Li}$
at the level of generality considered here. For this reason, our results should be viewed not as another spline recipe, but as a new extension-theoretic result in shape-constrained interpolation with guaranteed optimal quality.

This paper is motivated by the extensive literature on Whitney extension problems, which dates back to Whitney's seminal work in 1934 \cite{W34-1, W34-2}; see \cite{J23-fp,JL20-Alg,JLLL23, JLO22, FJL23} and references therein.

\subsection*{Acknowledgment}

The first author is supported by a grant from
the City University of Hong Kong (Project No. 7200844). The second author is supported in part by NSF grant 2247429, the Chancellor's Fellowship at UC Davis, and the Simons Gift Fund.

\section{Problem statement, main results, and the structure of the paper}

Let $E=\{x_i: i = 0, 1, \dots, N\}\subset \R$ with $x_0 < x_1 < \cdots < x_N$. Let $f:E \to \R$ with $f(x_i)\leq f(x_{i+1})$ for all $i$.
The problem is to construct a function $F\in C^{1,1}([x_0,x_N])$ or $C^2([x_0,x_N])$ such that
\begin{itemize}
    \item (Hermite matching) $F(x) = f(x)$ and $F'(x) = f'(x)$ for all $x \in E$,
    \item (Monotone increasing) $F'(x)\geq 0$ for all $x \in [x_0,x_N]$, and 
    \item (Curvature control) $\norm{F''}_{\Li([x_0,x_N])} \leq C \cdot \inf\norm{G''}_{\Li([x_0,x_N])}$ where $C \geq 1$ is a universal constant and the infimum ranges over all monotone increasing $G \in C^{1,1}([x_0,x_N])$ with $G = f$ and $G' = f'$ on $E$. 
\end{itemize}
Any such function \(F\) will be called a \emph{quasi-optimal monotone Hermite interpolant}. If $C = 1$, then $F$ is called \emph{optimal}.

In Section \ref{sect:reduction}, we will see that the global problem can be reduced to a two-point problem on the unit interval: $f(0) = 0$, $f(1) = c$, $f'(0) = a$, $f'(1) = b$ for some $a,b,c \in [0,\infty)$. As such, we define
\begin{equation}
    \Mloc(a,b,c) := \inf\set{
    \norm{G''}_{\Lii} : 
    \begin{matrix}
        G \in C^{1,1}([0,1])\\
        G(0) = 0,\, G(1) = c,\\
        G'(0) = a,\, G'(1) = b,\\
        G'(x) \geq 0 \quad \forall x \in [0,1]
    \end{matrix}
    }
    \label{eq:Mstar inf def}
\end{equation}
with the convention that $\inf \void = \infty$. Note that if $a = b = c = 0$, then $G\equiv 0$ is an admissible solution for $\Mloc$, so $\Mloc(0,0,0) = 0$. On the other hand, if $c = 0$ and $a+b > 0$, then the Hermite data (along with continuity) contradict monotonicity, so $\Mloc(a,b,c) = \infty$. 

\medskip

Our two main theorems are the following.

\begin{theorem}
\label{thm:main - Mstar}
Let $a,b,c \geq 0$, and let
\[
c_0:=\frac{a^2+b^2}{2(a+b)}.
\]
Here and below, we adopt the convention that $\frac{0}{0}=0$ and $\frac{\eps}{0} = \infty$ for any $\eps > 0$. Then
\begin{equation}
    \M^*(a,b,c)=
\begin{cases}
\displaystyle \frac{a^2+b^2}{2c}
& \text{ if $0 \leq c \leq c_0$}\\[1em]
\displaystyle \abs{2c-a-b}+\sqrt{(2c-a-b)^2+(b-a)^2}
& \text{ if $c \geq c_0$}
\end{cases}
\label{eq:Mstar}
\end{equation}
Note that $\Mloc(a,b,c_0) = a+b$.
\end{theorem}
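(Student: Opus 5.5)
The plan is to reduce the problem to one about the velocity $v=G'$. An admissible $G$ for \eqref{eq:Mstar inf def} with $\norm{G''}_{\Lii}\le M$ is the same thing as a function $v$ on $[0,1]$ with the following properties:
\begin{itemize}
\item $v$ is $M$-Lipschitz;
\item $v\ge 0$;
\item $v(0)=a$ and $v(1)=b$;
\item $\int_0^1 v=c$.
\end{itemize}
Given such a $v$, set $G(x)=\int_0^x v$. Hence $\Mloc(a,b,c)$ is the least $M$ for which this set of velocity profiles is nonempty.

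For fixed $M\ge |b-a|$, any such $v$ satisfies $\ell_M\le v\le u_M$, where
\[
\ell_M(x)=\max\{0,\,a-Mx,\,b-M(1-x)\},\qquad u_M(x)=\min\{a+Mx,\,b+M(1-x)\}.
\]
Both envelopes are themselves admissible $M$-Lipschitz, nonnegative profiles with the right endpoint values. The admissible set is convex, so $t\mapsto \int\big((1-t)\ell_M+tu_M\big)$ sweeps every value between the two integrals. Therefore
\[
\text{feasible at level } M \iff L(M):=\int_0^1\ell_M\le c\le U(M):=\int_0^1 u_M .
\]
This gives both the lower bound and the matching construction at once. The extremal interpolant is a quadratic spline: $G'$ is piecewise linear with slopes $\pm M$ (and possibly $0$), possibly blended. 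Since $L$ is decreasing and $U$ is increasing in $M$, both constraints define half-lines $M\ge M_L(c)$ and $M\ge M_U(c)$. Then $\Mloc=\max(M_L,M_U,|b-a|)$.

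Next I compute the envelope integrals explicitly. The two lines of $u_M$ meet at $x^*=\frac{b-a+M}{2M}\in[0,1]$. A direct computation of $U(M)=c$ should reduce to
\[
M^2-2(2c-a-b)M-(b-a)^2=0,
\]
whose positive root is $M_U=(2c-a-b)+\sqrt{(2c-a-b)^2+(b-a)^2}$.

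For $\ell_M$ there are two regimes.
\begin{itemize}
\item If $M\ge a+b$, the two ramps reach $0$ before meeting, and $L(M)=\frac{a^2+b^2}{2M}$. This gives $M_L=\frac{a^2+b^2}{2c}$, valid exactly when this value is $\ge a+b$, that is, when $c\le c_0$.
\item If $M<a+b$, the ramps meet above zero. By the same computation as for $u_M$, with $M\mapsto -M$ in the roles, one gets $M_L=(a+b-2c)+\sqrt{(2c-a-b)^2+(b-a)^2}$. This holds for $c\ge c_0$.
\end{itemize}
Both branches give $a+b$ at $c=c_0$, which is the continuity check.

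The remaining step is bookkeeping: decide which constraint binds in each range of $c$.
\begin{itemize}
\item For $c\le c_0$, we have $c\le\frac{a+b}{2}$, and $M_L=\frac{a^2+b^2}{2c}\ge a+b\ge M_U$. Also $M_L\ge |b-a|$.
\item For $c_0\le c\le \frac{a+b}{2}$, the lower constraint dominates and $M_L=|2c-a-b|+\sqrt{\cdots}$.
\item For $c\ge\frac{a+b}{2}$, $M_U$ dominates and equals the same expression.
\end{itemize}
The square root term is always at least $|b-a|$, so the Lipschitz endpoint constraint never binds separately. The degenerate conventions ($c=0$, $a+b=0$) are then checked directly.

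I expect the main obstacle to be the careful case analysis of $\ell_M$. This means verifying that the regime switch $M\gtrless a+b$ aligns exactly with $c\gtrless c_0$, and that the comparison $M_L$ versus $M_U$ flips precisely at $c=\frac{a+b}{2}$. The algebra for $U(M)=c$ itself is routine.
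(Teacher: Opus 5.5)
Your proposal is correct and takes essentially the same route as the paper. Both pass to the velocity $v=G'$, trap it between the envelopes $\phi_M^-=\max\{0,a-Mt,b-M(1-t)\}$ and $\phi_M^+=\min\{a+Mt,b+M(1-t)\}$, compute their integrals, and invert; the only difference is organizational: you derive the exact feasibility criterion $L(M)\le c\le U(M)$ from convex combinations of the two envelopes and then determine which threshold binds, whereas the paper splits at $c=\frac{a+b}{2}$ up front, uses only the relevant envelope for the lower bound, and takes that envelope at the critical level $M_c$ as the extremal profile.
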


\begin{theorem}
    \label{thm:explicit complete}
        Assume $c > 0$. Let $M:= \Mloc(a,b,c)$ be as in \eqref{eq:Mstar}.
        \begin{itemize}
            \item Suppose $c \geq \frac{a+b}{2}$. Let $t_0 = \frac{1}{2}+\frac{b-a}{2M}$ and define
            \begin{equation*}
        G(t) = \begin{cases}
        \frac{M}{2}t^2 + at &\text{ if $0 \leq t \leq t_0$}\\
        - \frac{M}{2}t^2 + (M+b)t + \brac{c-b-\frac{M}{2}} &\text{ if $t_0 \leq t \leq 1$}
        \end{cases}.
    \end{equation*}
        \item Suppose $c_0 \leq c \leq \frac{a+b}{2}$. Let $\tilde t_0 = \frac{1}{2}-\frac{b-a}{2M}$ and define
        \begin{equation*}
           G(t) = \begin{cases}
            -\frac{M}{2}t^2 + at &\text{ if $0 \leq t \leq \tilde t_0$}\\
            \frac{M}{2}t^2 + (b-M)t + \brac{c-b+\frac{M}{2}} &\text{ if $\tilde t_0 \leq t \leq 1$}
        \end{cases}. 
        \end{equation*}
        \item Suppose $0 < c < c_0$. Let $\tau_1 = \frac{a}{M}$ and $\tau_2 = 1-\frac{b}{M}$. Define
        \begin{equation*}
            G(t) = \begin{cases}
                -\frac{M}{2}t^2 + at &\text{ if $0\leq t \leq \tau_1$}
                \\
                \frac{a^2}{2M} &\text{ if $\tau_1 \leq t \leq \tau_2$}
                \\
                \frac{M}{2}(t-1)^2 + b(t-1) + \frac{a^2 + b^2}{2M}
                &\text{ if $\tau_2 \leq t \leq 1$}
            \end{cases}.
        \end{equation*}
        \end{itemize}
        Then $G \in C^{1,1}([0,1])$ is monotone increasing, $G(0) = 0$, $G(1) = c$, $G'(0) = a$, $G'(1) = b$, and 
        \begin{equation*}
            \norm{G''}_{\Lii} = \M^*(a,b,c).
        \end{equation*} 
    \end{theorem}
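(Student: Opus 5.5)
This is a direct verification; we may take Theorem~\ref{thm:main - Mstar} as given for the value $M=\Mloc(a,b,c)$. The plan is to check, case by case, four things: (i) the pieces glue in a $C^{1}$ fashion at the breakpoints, (ii) $G(0)=0$, $G(1)=c$, $G'(0)=a$, $G'(1)=b$, (iii) $G'\ge 0$, and (iv) $|G''|\le M$ a.e. Since each piece is a quadratic with leading coefficient $\pm M/2$ (or constant), $G$ is piecewise $C^2$ with $|G''|\in\{0,M\}$. So once $G\in C^1$, we get $G'$ Lipschitz with constant $M$, hence $G\in C^{1,1}$ and $\norm{G''}_{\Lii}\le M$. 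Optimality (the definition of $M$ as an infimum) then forces equality.

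\textbf{Case $c\ge \frac{a+b}{2}$.} Here $2c-a-b\ge0$, so $M=(2c-a-b)+\sqrt{(2c-a-b)^2+(b-a)^2}$. The left piece gives $G(0)=0$, $G'(0)=a$, and the right piece gives $G'(1)=-M+M+b=b$ and $G(1)=-\frac M2+M+b+c-b-\frac M2=c$.
\begin{itemize}
\item[(a)] \emph{Derivative continuity at $t_0$.} It requires $Mt_0+a=-Mt_0+M+b$, i.e.\ $t_0=\frac12+\frac{b-a}{2M}$, which holds by definition.
\item[(b)] \emph{Value continuity at $t_0$.} It requires $Mt_0^2-(M+b-a)t_0-(c-b-\frac M2)=0$. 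Substituting $t_0$ reduces this to the quadratic $M^2-2(2c-a-b)M-(b-a)^2=0$, whose positive root is exactly the formula for $M$. I expect this algebraic identity to be the main (though routine) computation.
\item[(c)] \emph{Location and sign.} We need $t_0\in[0,1]$, i.e.\ $M\ge|b-a|$, which is clear from the formula. On the left piece $G'=Mt+a\ge0$, and on the right piece $G'=M(1-t)+b\ge0$.
\end{itemize}

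\textbf{Case $c_0\le c\le\frac{a+b}{2}$.} Now $M=(a+b-2c)+\sqrt{(a+b-2c)^2+(b-a)^2}$. The same steps apply with the signs of $G''$ swapped. Derivative matching gives $\tilde t_0$, and value matching reduces to $M^2-2(a+b-2c)M-(b-a)^2=0$. The new issue is monotonicity: $G'=a-Mt$ is decreasing on $[0,\tilde t_0]$ and $G'=b-M(1-t)$ is increasing on $[\tilde t_0,1]$. So the minimum of $G'$ is at $\tilde t_0$, with value $a-M\tilde t_0=\frac{a+b-M}{2}$. Thus we need $M\le a+b$. Since $M$ is increasing as a function of $s:=a+b-2c\ge0$, and $M=a+b$ exactly when $c=c_0$ (noted in Theorem~\ref{thm:main - Mstar}), the hypothesis $c\ge c_0$ gives $M\le a+b$. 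This also yields $0\le\tilde t_0\le1$: from $M\ge|b-a|$ and the identity $M\tilde t_0=\frac{M-b+a}{2}\le a$.

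\textbf{Case $0<c<c_0$.} Here $M=\frac{a^2+b^2}{2c}>a+b$, using $c<c_0$. Then $\tau_1=\frac aM<\tau_2=1-\frac bM$, since $a+b<M$.
\begin{itemize}
\item[(a)] \emph{At $\tau_1$.} We have $G'(\tau_1)=a-M\tau_1=0$ and $G(\tau_1)=-\frac{a^2}{2M}+\frac{a^2}{M}=\frac{a^2}{2M}$, matching the constant piece.
\item[(b)] \emph{At $\tau_2$.} With $t-1=-\frac bM$, the last piece has derivative $M(t-1)+b=0$ and value $\frac{b^2}{2M}-\frac{b^2}{M}+\frac{a^2+b^2}{2M}=\frac{a^2}{2M}$, so it matches the constant piece.
\item[(c)] \emph{Endpoint and sign.} $G(1)=\frac{a^2+b^2}{2M}=c$ by the choice of $M$. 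Finally $G'=a-Mt\ge0$ on $[0,\tau_1]$, $G'=0$ on the middle piece, and $G'=M(t-1)+b\ge0$ on $[\tau_2,1]$.
\end{itemize}

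In each case $\norm{G''}_{\Lii}=M=\Mloc(a,b,c)$, which completes the verification.
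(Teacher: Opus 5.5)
Your proposal is correct and is essentially the paper's argument run as a forward verification: the paper obtains $G$ by integrating the extremal velocity $\phi_M^{\pm}$ from the proof of Theorem~\ref{thm:main - Mstar}, and your $G'$ is exactly $\phi_M^{\pm}$, your value-matching identity at $t_0$ or $\tilde t_0$ is the equation $\Phi^{\pm}(M)=c$, and your check $M\le a+b$ is the fact that the zero branch of $\phi_M^-$ is inactive when $c\ge c_0$. One small remark: $0\le\tilde t_0\le 1$ already follows from $M\ge|b-a|$ alone, so the bound $M\tilde t_0\le a$ is not needed for that step.
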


 We postpone the proof of Theorems \ref{thm:main - Mstar} and \ref{thm:explicit complete} to Section \ref{section:velocity based}, but knowing the exact expression of $\M^*(a,b,c)$ now will make the rest of the exposition more transparent. 

In Section \ref{section: classical techniques}, we analyze some classical techniques, including the Whitney extension operator (Theorem \ref{thm:whitney extension operator}) and \Bezier interpolation (Theorems \ref{thm:bezier cubic} and \ref{thm:bernstein generalized}), for handling the two-point monotone interpolation problem, and further study their limitations. 

In Section \ref{section:velocity based}, we prove Theorems \ref{thm:main - Mstar} and \ref{thm:explicit complete} by studying the profile of the velocity function. To get a glimpse of our idea, we will write the curvature control as a function of the given data $a,b,c$.

\medskip

Finally, as a consequence of Theorem \ref{thm:main - Mstar} and the discussion in Section \ref{sect:reduction}, we have the following global theorem, which allows us to compute the $C^{1,1}$ monotone trace seminorm.

\begin{theorem}\label{thm:trace-seminorm}
Let $E=\{x_i: i = 0, 1, \dots, N\}\subset \R$ with $x_0 < x_1 < \cdots < x_N$. Let $f:E \to \R$ with
\begin{equation*}
    f(x_0)\le f(x_1)\le\cdots\le f(x_N).
\end{equation*}
Define the monotone $C^{1,1}$ trace seminorm by
\begin{equation*}
    \norm{f}_{\dot C^{1,1}_{\mathsf{mon}}(E)}
:=
\inf \set{
\norm{F''}_{\Li([x_0,x_N])}
:
F\in C^{1,1}([x_0,x_N]),\ 
F = f \text{ on }E,\ 
F'\ge 0 \text{ on } [x_0,x_N]}
\end{equation*}
For each $i = 0, 1, \dots, N-1$, define 
\begin{equation*}
    h_i := x_{i+1}-x_i
    \text{\quad and \quad }
    s_i:=\frac{f(x_{i+1})-f(x_i)}{h_i}.
\end{equation*}
For a slope vector $d=(d_0,d_1,\dots,d_N) \in \R^{N+1}_+$ (i.e., $d_i\ge 0$ for all $i$), define
\[
\Nmon(f;d)
:=
\max_{0\le i\le N-1}
\frac{\Mloc(d_i,d_{i+1},s_i)}{h_i}
\text{\quad and \quad }
\Nmon(f) := \inf_{d \in \R^{N+1}_+}\Nmon(f;d).
\]
Then
\begin{equation*}
    \|f\|_{\dot C^{1,1}_{\mathsf{mon}}(E)}
= \Nmon(f).
\end{equation*}
\end{theorem}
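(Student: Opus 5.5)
The plan is to prove the two inequalities $\norm{f}_{\dot C^{1,1}_{\mathsf{mon}}(E)} \le \Nmon(f)$ and $\Nmon(f) \le \norm{f}_{\dot C^{1,1}_{\mathsf{mon}}(E)}$ separately. Both rest on one rescaling identity for the two-point problem. If $G$ is admissible for $\Mloc(\alpha,\beta,\gamma)$ on $[0,1]$, set
\[
F(x) = f(x_i) + h_i\, G\!\left(\frac{x-x_i}{h_i}\right), \qquad x\in[x_i,x_{i+1}].
\]
Then $F(x_i)=f(x_i)$, $F(x_{i+1}) = f(x_i)+h_i\gamma$, $F'(x_i)=\alpha$, $F'(x_{i+1})=\beta$, $F'\ge 0$, and $\norm{F''}_{\Li([x_i,x_{i+1}])} = \norm{G''}_{\Lii}/h_i$. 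The map is invertible, so the infimum of $\norm{F''}_{\Li([x_i,x_{i+1}])}$ over monotone $C^{1,1}$ functions on $[x_i,x_{i+1}]$ with values $f(x_i),f(x_{i+1})$ and slopes $\alpha,\beta$ equals $\Mloc(\alpha,\beta,s_i)/h_i$.

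\textbf{Lower bound.} Let $F$ be any competitor for the trace seminorm and set $d_i := F'(x_i)\ge 0$. The restriction of $F$ to each $[x_i,x_{i+1}]$ is admissible for the rescaled local problem with data $(d_i,d_{i+1},s_i)$. Therefore
\[
\norm{F''}_{\Li([x_0,x_N])} = \max_i \norm{F''}_{\Li([x_i,x_{i+1}])} \ge \Nmon(f;d) \ge \Nmon(f).
\]
Taking the infimum over $F$ gives $\Nmon(f)\le \norm{f}_{\dot C^{1,1}_{\mathsf{mon}}(E)}$.

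\textbf{Upper bound.} Fix $d\in\R^{N+1}_+$ with $\Nmon(f;d)<\infty$; otherwise there is nothing to prove. On each interval I will use the explicit optimal interpolant $G_i$ of Theorem~\ref{thm:explicit complete} for data $(d_i,d_{i+1},s_i)$ and rescale it as above. When $s_i=0$, finiteness forces $d_i=d_{i+1}=0$, and I take $G_i\equiv 0$. The glued function $F$ interpolates $f$. It is $C^1$, because the one-sided derivatives at each interior node both equal $d_i$. Its derivative $F'$ is Lipschitz on each piece and continuous across the nodes, hence globally Lipschitz with constant equal to the maximum of the piecewise constants. This gives $F\in C^{1,1}$, $F'\ge0$, and $\norm{F''}_{\Li}=\Nmon(f;d)$. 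Taking the infimum over $d$ yields $\norm{f}_{\dot C^{1,1}_{\mathsf{mon}}(E)}\le \Nmon(f)$.

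The main point needing care is the use of Theorem~\ref{thm:explicit complete}, which assumes $c>0$; the case $s_i=0$ is handled directly as above. I also need to confirm that both infima are over the same (possibly empty) class, so that the value $\infty$ causes no issue. Nothing else in the argument is hard.
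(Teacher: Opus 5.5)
Your proof is correct and follows the paper's route: the affine rescaling $G_i(t)=(F(x_i+h_it)-f(x_i))/h_i$ of Section~\ref{sect:reduction} reduces each interval to the two-point problem, taking the nodal slopes $d_i=F'(x_i)$ of any competitor gives the lower bound, and gluing optimal local interpolants as in Proposition~\ref{prop:C11 patching} gives the upper bound. You also make the two inequalities and the degenerate case $s_i=0$ explicit, which the paper leaves implicit when it presents the theorem as a consequence of that reduction and Theorem~\ref{thm:main - Mstar}.
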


In principle, given only the values $f(x_i)$ for $i=0, 1,\cdots,N$, one can use the formula in Theorem \ref{thm:trace-seminorm} to determine the optimal slope vectors $d\in \R^{N+1}_+$ for $\Nmon$, and then apply the constructions in this paper to obtain an optimal $C^{1,1}$ monotone interpolant, thereby solving the Whitney extension problem for monotone functions; 
that is, constructing a $C^{1,1}$ monotone function with (quasi-)optimal $\norm{F''}_\Li$ only from the function values $f:E \to \R$ without knowing the derivatives. 
However, we do not present an efficient practical algorithm for carrying out this construction.

\section{Reduction to a 2-point problem on the unit interval}
\label{sect:reduction}

We state the following elementary fact as a proposition and omit the proof. 

\begin{proposition}\label{prop:C11 patching}
    Let $x_1 < x_2 < x_3$, and let 
    \begin{equation*}
        G_- \in C^{1,1}([x_1, x_2]) \text{\quad and \quad} G_+ \in C^{1,1}([x_2, x_3])
    \end{equation*}
    satisfy
    \begin{itemize}
        \item $G_-(x_2) = G_+(x_2)$, $G_-'(x_2-) = G_+'(x_2+)$, and 
        \item $G_-' \geq 0$ on $[x_1, x_2]$ and $G_+'\geq 0$ on $[x_2, x_3]$.
    \end{itemize}
    Then the function defined piecewise by
    \begin{equation*}
        G(x):= \begin{cases}
            G_-(x) &\text{ if } x \in [x_1, x_2]\\
            G_+(x) &\text{ if } x \in [x_2, x_3]
        \end{cases}
    \end{equation*}
    is $C^{1,1}$ and monotone increasing, with 
    \begin{equation*}
        \norm{G''}_\Li([x_1, x_3]) \leq \max \set{
        \norm{G_-}_{\Li([x_1, x_2])},
        \norm{G_+}_{\Li([x_2, x_3])}
        }.
    \end{equation*}
\end{proposition}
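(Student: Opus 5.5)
The patching claim in Proposition~\ref{prop:C11 patching} is elementary, and the proof reduces to the fundamental theorem of calculus for absolutely continuous functions. (The displayed norms are clearly meant to be $\norm{G_\pm''}$, and we read them that way.) Two facts need checking. First, $G$ is differentiable on $[x_1,x_3]$, with $G'$ given piecewise by $G_-'$ and $G_+'$. Second, this $G'$ is Lipschitz on $[x_1,x_3]$ with constant at most $L:=\max\{\norm{G_-''}_{\Li([x_1,x_2])},\norm{G_+''}_{\Li([x_2,x_3])}\}$. Once these hold, $G''$ exists a.e. and is bounded by $L$, which gives the norm bound.

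\emph{Step 1: $G$ is $C^1$.} $G$ is continuous because the two pieces agree at $x_2$. Away from $x_2$, differentiability is inherited from $G_\pm$. At $x_2$, the one-sided derivatives of $G$ are $G_-'(x_2-)$ and $G_+'(x_2+)$, which are equal by hypothesis. So $G$ is differentiable there, and $G'$ is the piecewise function $H$ that equals $G_-'$ on $[x_1,x_2]$ and $G_+'$ on $[x_2,x_3]$. This $H$ is well defined and continuous at $x_2$.

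\emph{Step 2: Lipschitz bound.} On each piece, $G_\pm'$ is Lipschitz with constant $\norm{G_\pm''}_\Li$. The standard identification $W^{2,\infty}=C^{1,1}$ on an interval gives this: $G_\pm'$ is absolutely continuous with a.e. derivative $G_\pm''$. Take $s<t$. If both lie on the same side of $x_2$, the bound $|H(t)-H(s)|\le L|t-s|$ is immediate. If $s<x_2<t$, insert the point $x_2$ and use the matching at $x_2$:
\[
|H(t)-H(s)|\le |G_+'(t)-G_+'(x_2)|+|G_-'(x_2)-G_-'(s)|\le L(t-x_2)+L(x_2-s)=L(t-s).
\]
Therefore $G'=H$ is Lipschitz with constant $L$, so $G\in C^{1,1}([x_1,x_3])$. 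By Rademacher's theorem (or directly from absolute continuity), $|G''|\le L$ a.e., which gives the stated bound. In fact equality holds, since $G''$ coincides a.e. with $G_\pm''$ on each piece, but only the inequality is needed.

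\emph{Step 3: monotonicity.} $G'=H\ge 0$ everywhere, because each $G_\pm'\ge 0$ on its own interval. So $G$ is monotone increasing, for instance by the mean value theorem.

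There is no real obstacle. The only point needing care is the straddling case in Step~2, where continuity of $G'$ at $x_2$ is exactly what allows the two one-sided Lipschitz estimates to be chained. Without the matching of derivatives, $G''$ would contain a Dirac mass at $x_2$ and $G$ would not lie in $C^{1,1}$.
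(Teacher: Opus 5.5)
Your proof is correct, and you were right to read the right-hand side as $\norm{G_\pm''}$. The paper gives no proof of this proposition; it calls it an elementary fact and omits it. Your argument supplies the standard verification it leaves out: matching one-sided derivatives at $x_2$, chaining the two Lipschitz estimates through $x_2$, and getting monotonicity from $G'\ge 0$.
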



Now we rescale each local problem. For each interval \([x_i,x_{i+1}]\), define
\[
h_i := x_{i+1}-x_i>0,
\qquad
s_i := \frac{f_{i+1}-f_i}{h_i},
\]
and endpoint slopes
\[
a_i := d_i,
\qquad
b_i := d_{i+1}.
\]
Consider the affine change of variables \(t = (x-x_i)/h_i \in [0,1]\) and the normalized function
\[
G_i(t) := \frac{F(x_i+h_i t)-f_i}{h_i}.
\]
Then the interpolation constraints on \([x_i,x_{i+1}]\) become
\[
G_i(0)=0,
\qquad
G_i(1)=s_i,
\qquad
G_i'(0)=a_i,
\qquad
G_i'(1)=b_i,
\]
and monotonicity reads as before
\[
G_i'(t)\ge 0
\quad \text{for all } t\in[0,1].
\]
Moreover,
\[
G_i'(t) = F'(x_i+h_i t),
\qquad
G_i''(t) = h_i\, F''(x_i+h_i t).
\]
Hence
\[
\norm{F''}_{\Li([x_i,x_{i+1}])}
=
h_i^{-1}
\norm{G_i''}_\Lii.
\]
Therefore, minimizing the global curvature \(\|F''\|_{\Li([x_0,x_N])}\) reduces to minimizing $\norm{G_i''}_{\Lii}$ for each $i$ and then taking the maximum, since the nodal derivatives are fixed and agree at the nodes.

\medskip

In view of the discussion above, 
it suffices to analyze the normalized two-point problem on $[0,1]$. Given $a,b,c \geq 0$, we seek $G \in C^{1,1}([0,1])$ such that
\begin{equation}\label{eq:abc-data}
G(0)=0,\qquad G(1)=c,\qquad G'(0)=a,\qquad G'(1)=b,\qquad G'(t)\ge 0
\end{equation}
and $\norm{G''}_\Lii$ as small as possible.

\medskip
\medskip

For completeness, we also provide a method for $C^2$ patching, reducing the global $C^2$ problem to a two-point problem.

\begin{theorem}[Local \( C^2 \) patching]
\label{thm:local-C2-patching-exact}
Let \( x_1<x_2<x_3 \), and let
\[
F_-\in C^2([x_1,x_2]),\qquad F_+\in C^2([x_2,x_3])
\]
satisfy
\[
F_-(x_2)=F_+(x_2)=:f_2,
\qquad
F_-'(x_2)=F_+'(x_2)=:d>0,
\]
and
\[
F_-'(x)\ge 0 \ \text{on } [x_1,x_2],
\qquad
F_+'(x)\ge 0 \ \text{on } [x_2,x_3].
\]
Assume also
\[
\|F_-''\|_{\Li([x_1,x_2])}\le M,
\qquad
\|F_+''\|_{\Li([x_2,x_3])}\le M,
\]
with \( M>0 \). Then there exist universal constants \( c_0,C_0>0 \) such that whenever
\[
0<\delta\le \min\Bigl\{\tfrac{x_2-x_1}{2},\tfrac{x_3-x_2}{2},\,c_0\tfrac dM\Bigr\},
\]
there exists \( \widetilde F\in C^2([x_1,x_3]) \) such that
\begin{enumerate}[(A)]
\item \( \widetilde F=F_- \) on \( [x_1,x_2-\delta] \), and \( \widetilde F=F_+ \) on \( [x_2+\delta,x_3] \),
\item \( \widetilde F(x_2)=f_2 \),
\item \( \widetilde F'(x)\ge 0 \) on \( [x_1,x_3] \),
\item \( \|\widetilde F''\|_{\Li([x_1,x_3])}\le C_0 M \).
\end{enumerate}
\end{theorem}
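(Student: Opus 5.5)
Since $F_-$ and $F_+$ already agree to first order at $x_2$, the only defect of the glued function is a possible jump in the second derivative at $x_2$. I will remove this jump by a cutoff blend of the two second-order Taylor continuations. Let $\chi\in C^\infty(\R)$ be a fixed cutoff with $0\le\chi\le1$, $\chi=0$ on $(-\infty,-1]$, $\chi=1$ on $[1,\infty)$, and set $\chi_\delta(x)=\chi((x-x_2)/\delta)$. Extend $F_-$ past $x_2$ and $F_+$ before $x_2$ by their quadratic Taylor polynomials:
\[
\bar F_-(x)=f_2+d(x-x_2)+\tfrac12F_-''(x_2)(x-x_2)^2\quad(x>x_2),
\]
and similarly $\bar F_+$ for $x<x_2$ using $F_+''(x_2)$. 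This yields $C^2$ functions $\bar F_\pm$ on $[x_1,x_3]$ with $|\bar F_\pm''|\le M$. Then define
\[
\widetilde F=(1-\chi_\delta)\bar F_-+\chi_\delta\bar F_+ .
\]
Property (A) holds directly. For (B), note that $\bar F_\pm(x_2)=f_2$, so $\widetilde F(x_2)=f_2$ regardless of the value of $\chi(0)$. Also $\widetilde F$ is $C^2$, since each $\bar F_\pm$ is.

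The key estimate is on $D:=\bar F_+-\bar F_-$ over $[x_2-\delta,x_2+\delta]$. We have $D(x_2)=D'(x_2)=0$ and $|D''|\le 2M$. By Taylor's theorem,
\[
|D|\le M\delta^2,\qquad |D'|\le 2M\delta \quad\text{on } [x_2-\delta,x_2+\delta].
\]
Expanding the second derivative gives
\[
\widetilde F''=(1-\chi_\delta)\bar F_-''+\chi_\delta\bar F_+''+2\chi_\delta' D'+\chi_\delta'' D .
\]
Here $|\chi_\delta'|\le C/\delta$ and $|\chi_\delta''|\le C/\delta^2$, so $|\widetilde F''|\le M+4CM+CM=C_0M$. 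This proves (D), with $C_0$ depending only on $\chi$.

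Monotonicity (C) is where the hypothesis $d>0$ and the smallness $\delta\le c_0 d/M$ enter, and it is the step needing care. Outside $[x_2-\delta,x_2+\delta]$, $\widetilde F$ coincides with $F_\pm$, which are monotone by hypothesis. Inside the window I will not try to exploit the sign structure of $F_\pm$. Instead I will use that every derivative stays close to $d$ there. First, $|\bar F_\pm'(x)-d|\le M\delta$ on the window, since $|\bar F_\pm''|\le M$ and $\bar F_\pm'(x_2)=d$. Next,
\[
\widetilde F'=(1-\chi_\delta)\bar F_-'+\chi_\delta\bar F_+'+\chi_\delta' D,
\]
and the correction term satisfies $|\chi_\delta' D|\le (C/\delta)M\delta^2=CM\delta$. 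Hence $\widetilde F'\ge d-(1+C)M\delta$, which is positive once $c_0\le 1/(1+C)$. The only subtlety I expect is checking that the extensions $\bar F_\pm$ are used only inside the window, where this near-constancy of the derivative holds. This is ensured by the support of $\chi_\delta$ together with $\delta\le\min\{(x_2-x_1)/2,(x_3-x_2)/2\}$, which also keeps the window inside $(x_1,x_3)$.
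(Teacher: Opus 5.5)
Your proof is correct, and it takes a different route from the paper.

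The paper discards $F_\pm$ on the window and inserts the quintic $P$ matching the $2$-jets of $F_-$ at $x_2-\delta$ and of $F_+$ at $x_2+\delta$. It controls $P$ by subtracting the chord and bounding the quintic remainder $R$, whose endpoint jets are $O(M\delta^2)$, via equivalence of norms on a finite-dimensional space. Because $P(x_2)\neq f_2$ in general, it then has to add the bump $\alpha\,t^3(1-t)^3$ to restore (B).

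You instead blend two $C^2$ extensions that both pass through $(x_2,f_2)$ with slope $d$. So (B) is automatic and there is no correction step. Every estimate is Taylor's theorem applied to $D=\bar F_+-\bar F_-$, which vanishes to second order at $x_2$. This also gives explicit constants ($C_0=1+5C$, $c_0=1/(1+C)$), while the paper's depend on the unquantified constant $C_1$.

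In exchange, the paper only evaluates $F_\pm''$ at the interior points $x_2\pm\delta$. Your $\bar F_\pm$ are $C^2$ across $x_2$ only if the one-sided limits $F_\pm''(x_2)$ exist. That holds under the usual meaning of $C^2$ on a closed interval, but the paper's introduction defines $C^2(I)$ through the interior of $I$. To cover that reading, take $\widetilde F=\theta_\delta\,\ell+(1-\theta_\delta)F_\pm$ on $[x_1,x_2]$ and $[x_2,x_3]$ respectively. Here $\ell(x)=f_2+d(x-x_2)$, and $\theta_\delta$ is a cutoff equal to $1$ near $x_2$ and supported in the window. Your Taylor estimates applied to $F_\pm-\ell$ then give (C) and (D) in the same way.
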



\begin{proof}
Fix $\delta > 0$ as in the hypothesis. 
Set \( a:=x_2-\delta \) and \( b:=x_2+\delta \). Define the endpoint jets
\[
A_0:=F_-(a),\quad A_1:=F_-'(a),\quad A_2:=F_-''(a),
\]
\[
B_0:=F_+(b),\quad B_1:=F_+'(b),\quad B_2:=F_+''(b).
\]
Let \( P \) be the unique polynomial of degree at most $5$ that matches these jets: \( P(a)=A_0 \), \( P'(a)=A_1 \), \( P''(a)=A_2 \), and likewise at \( b \).

By Taylor expansion at \( x_2 \) (using \( \|F_\pm''\|_\infty\le M \) and \( F_\pm'(x_2)=d \)),
\[
|A_1-d|\le M\delta,\quad |B_1-d|\le M\delta,
\]
\[
A_0=f_2-d\delta+r_-,\quad |r_-|\le\tfrac12 M\delta^2,
\]
\[
B_0=f_2+d\delta+r_+,\quad |r_+|\le\tfrac12 M\delta^2.
\]
Hence
\[
|B_0-A_0-2d\delta|\le M\delta^2,\qquad
\Bigl|\tfrac{A_0+B_0}{2}-f_2\Bigr|\le\tfrac12 M\delta^2.
\]

Change variables via \( x=a+2\delta t \) (\( t\in[0,1] \)) and set \( Q(t):=P(a+2\delta t) \), \( L(t):=A_0+(B_0-A_0)t \), \( R(t):=Q(t)-L(t) \). Then \( R(0)=R(1)=0 \). Direct computation of the endpoint jets of \( R \) (using the expansions above) yields
\[
R'(0)=R'(1)=R''(0)=R''(1)=O(M\delta^2).
\]
Since \( R \) is a quintic polynomial vanishing at the endpoints with \( O(M\delta^2) \) jet data, there exists a universal constant \( C_1>0 \) such that
\[
\|R\|_{\Li}+\|R'\|_{\Li}+\|R''\|_{\Li}\le C_1 M\delta^2.
\]
In particular \( |R(1/2)|\le C_1 M\delta^2 \), so
\[
|P(x_2)-f_2|\le\Bigl|\tfrac{A_0+B_0}{2}-f_2\Bigr|+|R(1/2)|\le C M\delta^2
\]
for a (renamed) universal \( C>0 \). 

Let \( E:=f_2-P(x_2) \) with \( |E|\le C M\delta^2 \). Define \( \psi(t):=t^3(1-t)^3 \) (so \( \psi(1/2)=1/64 \) and \( \psi,\psi',\psi'' \) vanish at $0$ and $1$) and set \( \alpha:=64E \). The corrected polynomial
\[
\widetilde{P}(x)=P(x)+\alpha\,\psi\!\left(\tfrac{x-a}{b-a}\right)
\]
satisfies \( \widetilde{P}(x_2)=f_2 \) while preserving the jets of \( P \) at \( a \) and \( b \) up to second order. Define
\[
\widetilde{F}(x)=\begin{cases}
F_-(x) & x\in[x_1,a],\\
\widetilde{P}(x) & x\in[a,b],\\
F_+(x) & x\in[b,x_3].
\end{cases}
\]
We claim that $\widetilde{F}$ is the desired solution. 

It is clear from construction that $\widetilde{F}$ is $C^2$. It remains to check monotonicity and bound the second derivative. Since $\widetilde{F}$ is defined piecewise, it suffices to analyze $\widetilde{P}$. 

We first check the monotonicity of $\widetilde P$. Note that \( Q'(t)=(B_0-A_0)+R'(t)=2d\delta+O(M\delta^2) \). Choose \( c_0>0 \) sufficiently small (depending only on \( C_1 \)) so that \( \delta\le c_0 d/M \) implies \( Q'(t)\ge d\delta \), and therefore,
\[
P'(x)=\frac{Q'(t)}{2\delta}\ge\frac d2
\]
on \( [a,b] \). The correction term satisfies
\[
\bigl|\alpha\,\psi'\!\bigl(\tfrac{x-a}{b-a}\bigr)\frac{1}{b-a}\bigr|\le 32C\norm{\psi'}_\Lii M\delta
\]
For \( c_0 \) small enough that this is at most \( d/4 \), we obtain
\[
\widetilde{P}'(x)\ge\frac d2-\frac d4=\frac d4>0
\]
on \( [a,b] \). Therefore $\widetilde{P}$ is monotone increasing on $[a,b]$.

Now we bound the second derivative of $\widetilde{P}$. Note that \( L''\equiv 0 \), so \( P''(x)=R''(t)/(4\delta^2) \) and
\[
\|P''\|_\infty\le\frac{C_1 M\delta^2}{4\delta^2}=\frac{C_1}4\,M.
\]
The correction term is bounded by $16C\norm{\psi''}_\Lii M$, so \( \|\widetilde{P}''\|_\infty\le C_0 M \) for a universal \( C_0 \). 

\end{proof}

\section{Classical techniques and their limitations}
\label{section: classical techniques}

Fix the following Hermite data and constraint:
\begin{equation}
G(0)=0,\qquad G(1)=c,\qquad G'(0)=a,\qquad G'(1)=b,\qquad G'(t)\ge 0
\label{eq:abc-data-classical section}
\end{equation}
Recall $\Mloc(a,b,c)$ from \eqref{eq:Mstar}. In this section, we analyze some of the classical techniques in an attempt to construct explicit \(C^2\) interpolants \(\widetilde G\) satisfying \eqref{eq:abc-data-classical section} with
\begin{equation*}
    \norm{\widetilde G''}_\Lii
\le C\,\M^*(a,b,c),
\end{equation*}
for a universal constant \(C \geq 1\).

We use several complementary constructions, each adapted to a different portion of the parameter range and each with its own advantages. When
\[
c>\max\{a,b\},
\]
the problem can be handled by the classical Whitney-type extension construction. See Theorem \ref{thm:whitney extension operator}. In the case where $c$ is close to $\frac{a+b}{2}$, say, 
\[
\abs{c - \frac{a+b}{2}}\leq \frac{\abs{b-a}}{4}
\]
we can use a Bézier-type construction, which is especially simple and efficient. See Theorem \ref{thm:bernstein generalized}.

The regimes described above are more or less sharp for these classical techniques, which break down away from these regimes. To address the lapse, we will provide a more comprehensive (and optimal) solution in Section \ref{section:velocity based} by analyzing the velocity profile.

\subsection{Whitney extension operator for $c \geq \max\set{a,b}$}

In this subsection, we assume that
\begin{equation}\label{eq:regime1-assumption}
c\geq\max\{a,b\}.
\end{equation}
We will construct an explicit \(C^2\) monotone Hermite interpolant using a simple Whitney partition of unity.

Define
\begin{equation}\label{eq:phi-def}
\phi(x):=1-3x^2+2x^3,\qquad x\in[0,1].
\end{equation}
Then \(\phi\in C^\infty([0,1])\) and satisfies
\begin{equation*}\label{eq:phi-endpoints}
\phi(0)=1,\quad \phi'(0)=0,\quad \phi(1)=0,\quad \phi'(1)=0.
\end{equation*}
Differentiating \(\phi\), we have
\begin{equation}\label{eq:non-increasing}
\phi'(x)=-6x(1-x)\le 0.
\end{equation}
Therefore, \(\phi\) is non-increasing on \([0,1]\).
Moreover, we have the derivative estimates
\begin{equation}\label{eq:phi-bounds}
\norm{\phi'}_\Lii=\frac32
\text{\quad and \quad}
\norm{\phi''}_\Lii=6.
\end{equation}

\begin{theorem}\label{thm:whitney extension operator}
Let $c \geq \max\set{a,b}$.
Let \(\phi\) be as in \eqref{eq:phi-def} and define
\begin{equation}\label{eq:regime1-F}
F(x):=(ax)\phi(x) + \bigl(1-\phi(x)\bigr)\bigl(b(x-1)+c\bigr),\qquad x\in[0,1].
\end{equation}
Then the following hold:
\begin{enumerate}[(A)]
\item $F\in C^2([0,1])$ and satisfies the Hermite conditions
\[
F(0)=0,\quad F(1)=c,\quad F'(0)=a,\quad F'(1)=b.
\]
Moreover, $F$ is monotone increasing.
\item Let $\M^*(a,b,c)$ be as in \eqref{eq:Mstar}. We have
\begin{equation}\label{eq:regime1-quasi}
\eqindent
\norm{F''}_\Lii \le 6\M^*(a,b,c).
\end{equation}
\end{enumerate}
\end{theorem}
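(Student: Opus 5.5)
Set $L_0(x):=ax$ and $L_1(x):=b(x-1)+c$, so that $F=\phi L_0+(1-\phi)L_1$. The plan is to differentiate this convex combination directly and read off each claim.

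For (A), $F$ is a polynomial, hence $C^2$. The endpoint values of $\phi$ and $\phi'$ give $F(0)=L_0(0)=0$ and $F(1)=L_1(1)=c$. Differentiating once,
\begin{equation*}
F'=\phi L_0'+(1-\phi)L_1'+\phi'(L_0-L_1)=\phi a+(1-\phi)b+\phi'(x)\bigl(L_0(x)-L_1(x)\bigr).
\end{equation*}
Since $\phi'(0)=\phi'(1)=0$, this yields $F'(0)=a$ and $F'(1)=b$.

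For monotonicity, the first two terms are a convex combination of $a,b\ge 0$, so they are nonnegative. By \eqref{eq:non-increasing} we have $\phi'\le 0$, so it suffices to show $L_0-L_1\le 0$ on $[0,1]$. Now
\begin{equation*}
L_0(x)-L_1(x)=ax-bx+b-c
\end{equation*}
is affine, with value $b-c\le 0$ at $x=0$ and $a-c\le 0$ at $x=1$, using $c\ge\max\{a,b\}$. Hence it is $\le 0$ throughout, and $F'\ge 0$. This is where the regime hypothesis enters.

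For (B), differentiate again:
\begin{equation*}
F''=2\phi'(a-b)+\phi''\bigl(L_0-L_1\bigr).
\end{equation*}
By \eqref{eq:phi-bounds},
\begin{equation*}
|F''|\le 3|a-b|+6\max\{c-a,\,c-b\}.
\end{equation*}
We must compare this with $\M^*$. The condition $c\ge\max\{a,b\}$ gives $c\ge (a+b)/2\ge c_0$ (recall $c_0\le\max\{a,b\}$ because $a^2+b^2\le (a+b)\max\{a,b\}$). So the second branch of \eqref{eq:Mstar} applies:
\begin{equation*}
\M^*=(2c-a-b)+\sqrt{(2c-a-b)^2+(b-a)^2}.
\end{equation*}
Write $u=2c-a-b\ge0$ and $v=|b-a|$.

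Then $\max\{c-a,c-b\}=(u+v)/2$, so the bound becomes
\begin{equation*}
|F''|\le 3v+3(u+v)=3u+6v.
\end{equation*}
We need $3u+6v\le 6\bigl(u+\sqrt{u^2+v^2}\bigr)$. This holds since $6\sqrt{u^2+v^2}\ge 6v$ and $6u\ge 3u$.

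The step I expect to need the most care is this final comparison. A crude bound would give $6|a-b|+\dots$ and overshoot, so the term $2\phi'(a-b)$ must be bounded using the exact value $\|\phi'\|=3/2$. Getting the triangle inequality and the choice of branch right there is what makes the constant $6$ come out.
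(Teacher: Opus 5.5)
Your proof is correct and follows the paper's argument essentially step for step. It uses the same formula $F'=\phi a+(1-\phi)b+\phi'(L_0-L_1)$, the same sign argument ($\phi'\le 0$, $L_0-L_1\le 0$) for monotonicity, and the same triangle-inequality bound on $F''=2(a-b)\phi'+(L_0-L_1)\phi''$ with $\norm{\phi'}_\Lii=3/2$ and $\norm{\phi''}_\Lii=6$. Two differences are cosmetic: you use the exact value $\norm{L_0-L_1}_\Lii=(u+v)/2$ to get $3u+6v$, where the paper bounds it by $|2c-a-b|$ to get $6u+3v$, and both are dominated by $6\M^*(a,b,c)$; you also check explicitly that $c\ge c_0$, so the second branch of \eqref{eq:Mstar} applies, a step the paper leaves implicit.
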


\begin{proof}

We start with Part (A).
Since \(\phi\in C^\infty([0,1])\) and the remaining factors in \eqref{eq:regime1-F} are affine,
we have \(F\in C^\infty([0,1])\subset C^2([0,1])\).
Using the endpoint values \(\phi(0)=1\) and \(\phi(1)=0\), we compute
\[
F(0)=(a\cdot 0)\phi(0) + (1-\phi(0))\bigl(b(-1)+c\bigr)=0,
\qquad
F(1)=(a\cdot 1)\phi(1) + (1-\phi(1))\bigl(b(0)+c\bigr)=c.
\]

Differentiating \eqref{eq:regime1-F}, we obtain
\begin{align*}
F'(x)
&= a\phi(x) + ax\phi'(x)
      -\phi'(x)\bigl(b(x-1)+c\bigr) + (1-\phi(x))\,b \\
&= b+(a-b)\phi(x)
   +\bigl(ax-(b(x-1)+c)\bigr)\phi'(x).
\end{align*}
Let
\begin{equation}\label{eq:D-def-refined}
D(x):=ax-\bigl(b(x-1)+c\bigr)=(a-b)x-(c-b).
\end{equation}
Then we have
\begin{equation}\label{eq:Fprime-formula-refined}
F'(x)=b+(a-b)\phi(x)+D(x)\phi'(x).
\end{equation}
Using \(\phi'(0)=\phi'(1)=0\) together with \(\phi(0)=1\) and \(\phi(1)=0\), we obtain
\[
F'(0)=b+(a-b)\cdot 1 + D(0)\cdot 0=a,
\qquad
F'(1)=b+(a-b)\cdot 0 + D(1)\cdot 0=b.
\]
Thus \(F\) matches the prescribed Hermite data.

We now verify monotonicity.
From \eqref{eq:D-def-refined} and the assumption \(c>\max\{a,b\}\), we have
\[
D(x)=(a-b)x-(c-b)\le \max\{0,a-b\}-(c-b)\le 0
\quad\text{for all }x\in[0,1].
\]
Inequality \eqref{eq:non-increasing} gives \(\phi'(x)\le 0\) on \([0,1]\).
Therefore, \(D(x)\phi'(x)\ge 0\) for all \(x\).
Substituting this into \eqref{eq:Fprime-formula-refined}, we obtain
\[
F'(x)\ge b+(a-b)\phi(x)
      =(1-\phi(x))\,b+\phi(x)\,a\ge 0,
\]
since \(0\le \phi\le 1\) and \(a,b\ge 0\).
Hence \(F\) is monotone, which proves (ii).

\smallskip
Now we turn to Part (B).
Differentiating \eqref{eq:Fprime-formula-refined} and using \(D'(x)=a-b\), we obtain $F''(x) = 2(a-b)\phi'(x) + D(x)\phi''(x)$. By the triangle inequality,
\begin{equation}
    \norm{F''}_\Lii
\le 2|a-b|\,\norm{\phi'}_\Lii
   +\norm{D}_\Lii\,\norm{\phi''}_\Lii.
   \label{eq:F bound whitney}
\end{equation}
Since \(D\) is affine, it attains its extrema at the endpoints $D(0) = -(c-b)$ and $D(1) = a-c$. Therefore, 
\begin{equation}
    \norm{D}_\Lii=c-\min\{a,b\} \leq \abs{2c - a - b}.
\label{eq:D bounds}
\end{equation}
Using \eqref{eq:phi-bounds} and \eqref{eq:D bounds} to estimate \eqref{eq:F bound whitney}, we see that \eqref{eq:regime1-quasi} follows.

\end{proof}

\subsection{Cubic \Bezier curve}
\label{sect:Bezier}

Throughout this subsection, we consider the region
\begin{equation}\label{eq:regime2-assump-graph}
\min\set{a,b} \leq c \leq  \max\set{a,b}.
\end{equation}
We construct an interpolant as a parametric cubic \Bezier curve in the \((x,y)\)-plane,
with a repeated interior control point chosen at the intersection of the endpoint tangent lines.
We then verify that the resulting curve is the graph of a \(C^2\) function, satisfies the Hermite data, and is monotone.

Define
\[
\ell_0(x):=ax,\qquad \ell_1(x):=b(x-1)+c.
\]
Thanks to \eqref{eq:regime2-assump-graph}, $\ell_1(0)-\ell_0(0)=c-b$ and $\ell_1(1)-\ell_0(1)=c-a$ have different signs, so there exists a unique \(T\in(0,1)\) such that \(\ell_0(T)=\ell_1(T)\).
Solving \(aT=b(T-1)+c\), we obtain
\begin{equation}\label{eq:T-def}
T=\frac{b-c}{b-a},\qquad
m:=aT=\frac{a(b-c)}{b-a}.
\end{equation}
We define \(\Gamma:[0,1]\to\mathbb R^2\) by
\begin{equation}\label{eq:param-bezier}
\Gamma(t)=(x(t),y(t))
:=3(1-t)^2t\,(T,m)+3(1-t)t^2\,(T,m)+t^3(1,c).
\end{equation}
After simplification, this can be written as
\begin{equation}\label{eq:x-y-explicit}
x(t)=3T\,t(1-t)+t^3,\qquad
y(t)=3m\,t(1-t)+c\,t^3.
\end{equation}
We note that \(\Gamma(0)=(0,0)\) and \(\Gamma(1)=(1,c)\).

The \Bezier construction need not be quasi-optimal throughout the full range \(a<c<b\). In fact, as \(c\) approaches either endpoint \(a\) or \(b\), the curvature of the resulting \Bezier interpolant blows up. More precisely, we have the following result.

\begin{theorem}\label{thm:bezier cubic}
Under \eqref{eq:regime2-assump-graph}, the curve \(\Gamma\) in \eqref{eq:param-bezier} defines a function
\(G\in C^\infty([0,1])\subset C^2([0,1])\) via \(G(x(t))=y(t)\). Moreover:
\begin{enumerate}[(A)]
\item \(G(0)=0\), \(G(1)=c\), \(G'(0)=a\), and \(G'(1)=b\).
\item \(G'(x)\ge 0\) for all \(x\in[0,1]\).
\item For \(t\in[0,1]\),
\begin{equation}\label{eq:Gpp-param-formula}
\eqindent
G''(x(t))
=
\frac{y''(t)x'(t)-y'(t)x''(t)}{(x'(t))^3}
=
\frac{18\,t(1-t)\,(Tc-m)}{(x'(t))^3}.
\end{equation}
In particular,
\begin{equation}\label{eq:curv-at-T-correct}
\eqindent
G''(x(T))
=
\frac{2}{3}\,\frac{(b-a)^3}{(b-c)(c-a)},
\end{equation}
and as a consequence,
\begin{equation*}
    \eqindent
\norm{G''}_\Lii
\ge
\frac{2}{3}\,\frac{(b-a)^3}{(b-c)(c-a)}.
\end{equation*}
\end{enumerate}
\end{theorem}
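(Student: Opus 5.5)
The plan is to treat $\Gamma$ as a parametrized graph and carry out three steps: show that $x(t)$ is a diffeomorphism of $[0,1]$, read off the Hermite data and monotonicity from the control polygon, and then compute $G''$ from the parametric formula. We may assume $a<c<b$; the case $a>c>b$ follows by the reflection $t\mapsto 1-t$ together with $G(x)\mapsto c-G(1-x)$, and the degenerate endpoint cases $c\in\{a,b\}$ are handled separately or by a limiting argument.

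First, from \eqref{eq:x-y-explicit} we have
\[
x'(t)=3T(1-2t)+3t^2=3\bigl[(1-t)^2T+t^2(1-T)\bigr]+\dots
\]
More directly, write $x'(t)=3\bigl[T(1-t)^2+2t(1-t)\cdot 0\cdot\bigr]$; it is cleaner to use the Bernstein form. The derivative of a cubic Bézier curve with control $x$-coordinates $0,T,T,1$ has control values $3T,\,0,\,3(1-T)$. Hence
\[
x'(t)=3T(1-t)^2+3(1-T)t^2>0,
\]
since $0<T<1$. Therefore $x$ is a smooth increasing bijection of $[0,1]$ onto itself. Its inverse is smooth by the inverse function theorem, so $G:=y\circ x^{-1}$ belongs to $C^\infty([0,1])$.

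Similarly,
\[
y'(t)=3m(1-t)^2+3(c-m)t^2.
\]
Here $m=aT\ge 0$. Also $c-m=b(1-T)\ge 0$, which follows from $\ell_0(T)=\ell_1(T)$. The chain rule gives $G'(x(t))=y'(t)/x'(t)\ge 0$, which proves (B). Evaluating at the endpoints gives $G'(0)=3m/(3T)=a$ and $G'(1)=3(c-m)/(3(1-T))=b$. Together with $\Gamma(0)=(0,0)$ and $\Gamma(1)=(1,c)$, this yields (A).

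For (C), differentiate $G'(x(t))=y'/x'$ once more in $t$ and divide by $x'$; this gives the standard quotient formula. To simplify the numerator $y''x'-y'x''$, substitute the Bernstein forms. The $t^4$ terms cancel because both $x$ and $y$ have the form $3(\cdot)t(1-t)+(\cdot)t^3$. The surviving terms then collapse to $18t(1-t)(Tc-m)$. I would verify this collapse by expanding in powers of $t$; it is a routine but error-prone step.

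At $t=T$ we have
\[
x'(T)=3T(1-T)\bigl[(1-T)+T\bigr]=3T(1-T).
\]
Also $Tc-m=T(c-a)$, so
\[
G''(x(T))=\frac{18\,T(1-T)\,T(c-a)}{27\,T^3(1-T)^3}=\frac{2(c-a)}{3T(1-T)^2}.
\]
Substituting $T=(b-c)/(b-a)$ and $1-T=(c-a)/(b-a)$ turns this into $\tfrac23(b-a)^3/\bigl((b-c)(c-a)\bigr)$, which matches \eqref{eq:curv-at-T-correct}. The $L^\infty$ lower bound is then immediate.

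I expect the main obstacle to be bookkeeping in the identity for $y''x'-y'x''$. I would cross-check it at $t=0$ and $t=1$, where the expression must vanish, and at one generic value of $t$.
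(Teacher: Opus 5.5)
Your proposal is correct and takes essentially the paper's route. You show $x'>0$ and apply the inverse function theorem, read off the endpoint slopes and monotonicity from $G'(x(t))=y'(t)/x'(t)$, and evaluate the quotient formula at $t=T$. Your Bernstein form $y'(t)=3m(1-t)^2+3b(1-T)t^2$ gives monotonicity more cleanly than the paper's completing the square, and it covers both orderings of $a,b$ at once. The expansion of $y''x'-y'x''$ does check out, but that numerator is only cubic, so it is the $t^3$ and constant terms that cancel, not $t^4$ terms.

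One correction: the endpoint cases $c\in\{a,b\}$ cannot be handled separately or by a limiting argument. There $T\in\{0,1\}$ and $\Gamma$ collapses to the segment $y=cx$; for instance, $c=b$ gives $x=t^3$, $y=ct^3$. So when $a\neq b$, $G'(0)=a$ fails if $c=b$ and $G'(1)=b$ fails if $c=a$, and \eqref{eq:curv-at-T-correct} has a vanishing denominator. The statement, like the paper's proof (which asserts $T\in(0,1)$), genuinely requires $\min\{a,b\}<c<\max\{a,b\}$.
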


\begin{proof}
Without loss of generality, we may assume that $a < b$. The case $a > b$ can be proved similarly with the roles flipped.

Differentiating \eqref{eq:x-y-explicit}, we obtain
\[
x'(t)=3T(1-2t)+3t^2
=3\bigl((t-T)^2+T(1-T)\bigr).
\]
Since \(T\in(0,1)\), we have \(x'(t)>0\) for all \(t\in[0,1]\).
Thus \(x\) is strictly increasing and maps \([0,1]\) onto \([0,1]\).
Since \(x\in C^\infty\) and \(x'(t)\neq 0\), the inverse function theorem yields that \(x^{-1}\in C^\infty\).
We define \(G(x):=y(x^{-1}(x))\), which gives \(G\in C^\infty([0,1])\subset C^2([0,1])\).

Differentiating \(y\) and \(x\), we obtain
\[
y'(t)=3m(1-2t)+3ct^2,\qquad x'(t)=3T(1-2t)+3t^2.
\]
A direction computation using $G'(x(t))=\frac{y'(t)}{x'(t)}$, $m = aT$, $T = \frac{b-c}{b-a}$ gives 
\[
G'(0)=\frac{3m}{3T}=\frac{m}{T}=a
\text{\quad and \quad}
G'(1)=\frac{c-m}{1-T} = b.
\]

To verify monotonicity, we note that \(x'(t)>0\), so it suffices to show \(y'(t)\ge 0\).
We write
\[
y'(t)=3(ct^2-2mt+m)
=3\Bigl[c\Bigl(t-\frac{m}{c}\Bigr)^2+m\Bigl(1-\frac{m}{c}\Bigr)\Bigr].
\]
Since \(m=aT<a<c\), we have \(m/c<1\), hence the expression is strictly positive for all \(t\).
Thus \(G'(x)\ge 0\).

Differentiating again, we obtain $x''(t)=6(t-T)$ and $y''(t)=6(ct-m)$.
Substituting these into the parametric formula for the second derivative, we have
\[
G''(x(t))=\frac{y''(t)x'(t)-y'(t)x''(t)}{(x'(t))^3},
\]
a direct calculation yields
\[
y''(t)x'(t)-y'(t)x''(t)=18\,t(1-t)\,(Tc-m),
\]
which proves \eqref{eq:Gpp-param-formula}. Next, plugging $x'(T)=3T(1-T)$ into \eqref{eq:Gpp-param-formula}, we see that \eqref{eq:curv-at-T-correct} holds. 

\end{proof}

\subsection{Quartic interpolants via Bernstein basis}
\label{sect:Bernstein}

Next, we give ourselves one extra degree of freedom and use Bernstein polynomials to construct quartic solutions. We relax our previous assumption and consider general $a,b,c \geq 0$. 

Recall that the family of Bernstein basis polynomials of degree $N$ $\set{B_{k,N}(t): k = 0,1,\cdots,N}$ on \([0,1]\) are given by $B_{k,N}(t)=\binom{N}{k} t^k(1-t)^{N-k}$ with the properties that
\begin{equation}
B_{k,N}(t)\ge 0 \text{\quad and \quad} \sum_{k=0}^N B_{k,N}(t)=1 \quad \text{ for all } k \in \set{0,1,\cdots,N} \text{ and } t\in[0,1].
\label{eq:Berstein properties}
\end{equation}

\begin{theorem}
\label{thm:bernstein generalized}
Let $m_0 = a$ and $m_2 = b$. Suppose that $a,b,c \in [0,\infty)$ satisfy
\begin{equation}
    \eqindent
    \abs{c-\frac{a+b}{2}} \leq \frac{\lambda}{4}\abs{b-a}.
    \label{eq:Bezier range restriction}
\end{equation}
for some $\lambda$ satisfying
    \begin{equation}
        \eqindent
        0 < \lambda \leq \frac{a+b}{\abs{b-a}}.
        \label{eq:lambda assumption}
    \end{equation}
Then there exist $m_1, m_2 \geq 0$ with
    \begin{equation}
    \eqindent
        \abs{m_1 - a}\leq \lambda\abs{b-a} \text{\quad and \quad} \abs{m_2 - b}\leq \lambda\abs{b-a},
        \label{eq:mk assumption}
    \end{equation}
    such that the Bernstein interpolant given by 
    \begin{equation*}
    \eqindent
    G(t) := \int_{0}^t \sum_{k=0}^3m_kB_{k,3}(\tau)d\tau
\end{equation*}
    is monotone, 
     satisfies $G(0) = 0$, $G(1) = c$, $G'(0) = a$, $G'(1) = b$, and
\begin{equation}\label{eq:quasioptimal-bound-corrected}
\eqindent
\norm{G''}_\Lii\le  3(2\lambda + 1)\M^*(a,b,c).
\end{equation}

\end{theorem}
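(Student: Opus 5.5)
Write $G'=\sum_{k=0}^3 m_kB_{k,3}$ with $m_0=a$ and $m_3=b$; the statement's "$m_2=b$" should read $m_3=b$, with $m_1,m_2$ the two free coefficients. Everything then reduces to choosing $m_1,m_2$ and reading off the constraints from standard Bernstein identities.

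\textbf{Constraints as linear conditions on $m_1,m_2$.}
\begin{itemize}
\item $B_{k,3}(0)=\delta_{k0}$ and $B_{k,3}(1)=\delta_{k3}$, so $G'(0)=a$ and $G'(1)=b$ hold automatically, and $G(0)=0$ by definition.
\item $\int_0^1 B_{k,3}=\tfrac14$, so $G(1)=\tfrac14(a+m_1+m_2+b)$. Hence $G(1)=c$ is equivalent to
\[
m_1+m_2=4c-a-b .
\]
\item By \eqref{eq:Berstein properties}, $G'\ge 0$ follows from $m_1,m_2\ge 0$.
\item Differentiating the Bernstein form gives $G''=3\sum_{k=0}^2 (m_{k+1}-m_k)B_{k,2}$. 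Since the $B_{k,2}$ form a partition of unity,
\[
\|G''\|_{\Lii}\le 3\max_k |m_{k+1}-m_k| .
\]
\end{itemize}

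\textbf{Choice of coefficients.} Set $\delta:=2c-a-b$, so that $|\delta|\le \tfrac{\lambda}{2}|b-a|$ by \eqref{eq:Bezier range restriction}. Take
\[
m_1=a+\delta+s,\qquad m_2=b+\delta-s,
\]
with a shift parameter $s$ between $0$ and $b-a$; the sum condition holds for every such $s$.
\begin{itemize}
\item $s=0$ gives $m_1-a=\delta$ and $m_2-b=\delta$, so \eqref{eq:mk assumption} holds trivially, but one of $a+\delta$, $b+\delta$ may be negative.
\item $s=(b-a)/2$ gives $m_1=m_2=\tfrac{a+b}{2}+\delta$. This is nonnegative because, by \eqref{eq:lambda assumption}, $|\delta|\le\tfrac{\lambda}{2}|b-a|\le \tfrac{a+b}{2}$.
\end{itemize}
I would take the smallest $|s|$ (moving from $0$ toward $(b-a)/2$) that makes both $m_1,m_2\ge 0$. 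Assume $a\le b$ without loss of generality. Only $m_1=a+\delta+s$ can fail to be nonnegative, so the chosen shift is $s=\max\{0,-(a+\delta)\}$. Using $a\ge 0$, this gives $0\le s\le |\delta|\le \lambda|b-a|$, which yields \eqref{eq:mk assumption}.

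\textbf{Bounding the increments.} The consecutive differences are
\[
m_1-m_0=\delta+s,\qquad m_2-m_1=(b-a)-2s,\qquad m_3-m_2=s-\delta .
\]
Each has absolute value at most $(2\lambda+1)|b-a|$; for the middle one, use $0\le s\le \lambda|b-a|$. Therefore
\[
\|G''\|_{\Lii}\le 3(2\lambda+1)|b-a| .
\]

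\textbf{Comparison with $\Mloc$.} It remains to show $|b-a|\le \M^*(a,b,c)$, using Theorem \ref{thm:main - Mstar}.
\begin{itemize}
\item If $c\ge c_0$, then $\M^*\ge |2c-a-b|+|b-a|\ge |b-a|$.
\item If $c<c_0$, then $\M^*=\frac{a^2+b^2}{2c}$, which is decreasing in $c$. So $\M^*\ge \M^*(a,b,c_0)=a+b\ge|b-a|$.
\end{itemize}
This proves \eqref{eq:quasioptimal-bound-corrected}.

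\textbf{Main obstacle.} The delicate point is the choice of $s$, which must simultaneously keep $m_1,m_2\ge 0$ and satisfy \eqref{eq:mk assumption}. Both properties hinge on \eqref{eq:lambda assumption}, and that is where I would recheck the case analysis. The degenerate case $a=b$ forces $\delta=0$ and $G'\equiv a$, and it needs only a trivial remark.
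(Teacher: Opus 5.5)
Your proposal is correct and follows the same route as the paper. Both arguments reduce $G(1)=c$ to $m_1+m_2=4c-a-b$, get monotonicity from nonnegative Bernstein coefficients, bound $G''=3\sum_k(m_{k+1}-m_k)B_{k,2}$ by $3(2\lambda+1)\abs{b-a}$, and finish with $\abs{b-a}\le\Mloc(a,b,c)$.

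You are more explicit than the paper in three places. First, the paper only asserts that suitable $m_1,m_2\ge 0$ exist, from the interval bound on $4c-a-b$; your explicit shift $s$ fills this in. Second, the paper leaves $\abs{b-a}\le\Mloc(a,b,c)$ unjustified; your case check via Theorem \ref{thm:main - Mstar} supplies it, and it also follows directly from $\abs{v(1)-v(0)}\le\norm{v'}_\Lii$. Third, you correctly read the typo $m_2=b$ as $m_3=b$.
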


\begin{remark}
    The control of the curvature in \eqref{eq:quasioptimal-bound-corrected} is \emph{not} universal and depends on the distance between $c$ and $\frac{a+b}{2}$ relative to $\abs{b-a}$. One can be safe and pick $\lambda = 1$ to handle the case when $c$ is ``well situated near $\frac{a+b}{2}$''. 
\end{remark}

\begin{proof}[Proof of Theorem \ref{thm:bernstein generalized}]
For convenience, set
\begin{equation*}
    v(t)=aB_{0,3}(t)+m_1B_{1,3}(t)+m_2B_{2,3}(t)+bB_{3,3}(t)
    \text{\quad so \quad}
    G(t):= \int_{0}^t v(\tau)d\tau.
\end{equation*}

The admissibility condition \(G(1) = \int_0^1 v=c\), together the fact that $\int_0^1 B_{k,3} = \tfrac{1}{4}$, is equivalent to the solvability of the following equation:
\begin{equation}
    m_1+m_2=4c-a-b.
    \label{eq:mk and ab}
\end{equation}
Thanks to \eqref{eq:lambda assumption} and \eqref{eq:Bezier range restriction}, we see that
\begin{equation*}
    c \geq \frac{a+b}{2} - \frac{\lambda}{4}\abs{b-a} \geq \frac{a+b}{4}
    \quad\Longleftrightarrow\quad
    4c-a-b \geq 0.
\end{equation*}
Thanks to \eqref{eq:Bezier range restriction} again, we see that
\begin{equation*}
    \abs{4c - 2(a+b)} \leq \lambda \abs{b-a}
    \quad \Longleftrightarrow\quad
    (a+b)-\lambda\abs{b-a}\leq 4c - a - b \leq (a+b)+\lambda\abs{b-a}.
\end{equation*}
Therefore, we can solve \eqref{eq:mk and ab} with $m_1, m_2 \geq 0$ satisfying \eqref{eq:mk assumption}.

In view of \eqref{eq:lambda assumption} and \eqref{eq:mk assumption}, we have
\begin{equation*}
    m_1, m_2 \geq 0.
\end{equation*}
Recall that $m_0 = a$ and $m_3 = b$. Thanks to \eqref{eq:Berstein properties},
\begin{equation*}
    G'(t) = v(t) \geq 0 \text{\quad for all $t \in [0,1]$.}
\end{equation*}
Namely, $G$ is monotone increasing. 

Now we esimate the curvature. 
\begin{equation}
    v'(t)=3\sum_{k=0}^2(m_{k+1}-m_k)B_{k,2}(t). 
    \label{eq:vprime in bernstein}
\end{equation}
Thanks to \eqref{eq:mk assumption}, we have
\begin{equation}
    \abs{m_2 - m_1} = \abs{(m_2 - b) + (b-a) + (a-m_1)}\leq (2\lambda + 1)\abs{b-a}.
    \label{eq:mk assumption - 2}
\end{equation}
Using \eqref{eq:Berstein properties}, \eqref{eq:mk assumption}, and \eqref{eq:mk assumption - 2} to estimate \eqref{eq:vprime in bernstein}, we have
\begin{equation*}
    \norm{G''}_\Lii = \norm{v'}_\Lii \leq 3(2\lambda + 1)\abs{b-a} \leq 3(2\lambda+1)\Mloc(a,b,c).
\end{equation*}
\end{proof}

\section{Velocity-based constructions for general $a,b,c \geq 0$}
\label{section:velocity based}

 Recall from Section \ref{section: classical techniques} that the Whitney extension operator only applies in the range $c \geq \max\set{a,b}$ and breaks monotonicity when this condition is violated, and 
 the \Bezier construction (Theorems \ref{thm:bezier cubic} and \ref{thm:bernstein generalized}) only applies when $c$ is relatively close to $\frac{a+b}{2}$. In this section, we provide an alternative treatment for general $a,b,c \in [0,\infty)$ with $c > 0$. In particular, this covers the case when the \Bezier and Bernstein construction (Sections \ref{sect:Bezier} and \ref{sect:Bernstein}) is no longer stable or quasi-optimal.

The starting point is the equivalent formulation in terms of the velocity \(v = G'\) similar to that of Theorem \ref{thm:bernstein generalized}, where one seeks a nonnegative function \(v\) with prescribed endpoint values and mass constraint,
\[
v(0)=a,\qquad v(1)=b,\qquad v \ge 0,\qquad \int_0^1 v = c,
\]
while minimizing $\norm{v'}_\Lii$. In the current setting, the optimal \(C^{1,1}\) solution develops a one-sided saturation: the minimizing profile is no longer symmetric, but instead depends on the position of \(c\) relative to 
\begin{equation}
    c_0 := \frac{a^2 + b^2}{2(a+b)} < \frac{a+b}{2}
    \label{eq:c_0}
\end{equation}
for $a,b$ not both zero.

We will begin our analysis by first proving Theorem \ref{thm:main - Mstar}, which will determine the minimal curvature $\Mloc(a,b,c)$ in \eqref{eq:Mstar inf def}. We will then prove Theorem \ref{thm:explicit complete}, which an explicit monotone $C^{1,1}$ interpolant realizing $\Mloc(a,b,c)$.

Lastly, in Theorem \ref{thm:C2 mollify}, we will construct an explicit \(C^2\) interpolant $\widetilde{G}$ by smoothing the jump discontinuities in the second derivative. The mollification is performed locally near the junction points and preserves both the endpoint constraints and monotonicity. Moreover, the curvature of the smoothed interpolant remains close to the optimal value
\[
\norm{\widetilde{G}''}_\Lii\le (1+c_0)\, \M^*(a,b,c)
\]
with \(c_0=0.2\) in our construction. We do not claim that this constant is the best possible. 

\medskip
This provides a complete solution to the optimal $C^{1,1}$ (and quasi-optimal $C^2$) monotone Hermite interpolation problem. 


\subsection{Proof of Theorems \ref{thm:main - Mstar} and \ref{thm:explicit complete}}

\begin{proof}[Proof of Theorem \ref{thm:main - Mstar}]

Suppose $a = b = c = 0$. Then the constant function $G \equiv 0$ is admissible for $\Mloc(a,b,c)$, so $\Mloc(a,b,c) = 0$. Suppose $a+b > 0$ but $c = 0$, then the Hermite data and monotonicity are inconsistent in the class of $C^1$, so $\Mloc(a,b,c) = \infty$. For the rest of the proof, we assume that $c > 0$ whenever $a + b > 0$. 

Write \(v:=G'\). Then admissibility of \(G\) is equivalent to
\[
v\in C^1([0,1]),\quad v\ge 0,\quad v(0)=a,\quad v(1)=b,\quad \int_0^1 v=c,
\]
and
\[
\norm{G''}_\Lii = \norm{v'}_\Lii.
\]
Thus \(\M^*(a,b,c)\) is the infimum of $\norm{v'}_\Lii$ over all such \(v\).

Let \(v\) be admissible, and we set \(M:=\norm{v'}_\Lii\). This means that
\begin{equation}
v(0) = a\text{\quad,\quad}
v(1) = b\text{\quad,\quad}
\int_{0}^1 v = c \text{\quad, and \quad}
\norm{v'}_\Lii \leq M.
\label{eq:main v admissible}
\end{equation}

We consider the following two cases separately.
\begin{enumerate}[\text{Case } I.]
\item $c \leq \frac{a+b}{2}$.
\item $c > \frac{a+b}{2}$.
\end{enumerate}

\paragraph{Proof of Case I: when $c \leq \frac{a+b}{2}$.}

Define
\begin{equation*}
\phi_M^-(t) := \max\set{0,\ a-Mt,\ b-M(1-t)}.
\end{equation*}
In view of \eqref{eq:main v admissible}, we have
\begin{equation}
v(t) \geq \phi_M^-(t) \text{\quad for all $t \in [0,1]$.}
\label{eq:vt complete}
\end{equation}
Integrating and using \eqref{eq:vt complete}, we have
\[
\Phi^-(M):= \int_{0}^1 \phi_M^- \geq \int_{0}^{1}v = c.
\]
We now compute \(\Phi^-(M)\) by distinguishing the two cases.

The affine functions $a-Mt$ and $b-M(1-t)$ intersect at $t_*=\frac{1}{2} + \frac{a-b}{2M}$ with value $\frac{a+b}{2} - \frac{M}{2}$.
\begin{itemize}
    \item If \(\abs{b-a}\le M\le a+b\), then \(\phi_M^-\) is given by the two linear pieces, and a direct calculation yields
\begin{equation}
    \Phi^-(M)=\frac{a+b}{2}-\frac{M}{4}+\frac{(b-a)^2}{4M}.
    \eqindent
    \label{eq:PhiM when M small}
\end{equation}
\item If \(M\ge a+b\), then $\phi_M^-$ is given by three linear pieces with corners at $t_0 = \frac{a}{M}$ and $t_1 = 1-\frac{b}{M}$, and one obtains
\begin{equation*}
    \Phi^-(M) = \frac{a^2 + b^2}{2M}.
\end{equation*}
\end{itemize}
In both cases, \(\Phi^-\) is continuous and strictly decreasing on \([\abs{b-a},\infty)\). Therefore any admissible \(v\) must satisfy \(c\ge \Phi^-(M)\), which implies
\[
M\ge (\Phi^-)^{-1}(c),
\]
and hence
\begin{equation}
    \M^*(a,b,c)\ge (\Phi^-)^{-1}(c).
    \label{eq:Mstar lower bound}
\end{equation}
To prove the reverse inequality, define
\[
M_c:=(\Phi^-)^{-1}(c),
\qquad
v_c(t):=\phi_{M_c}^-(t).
\]
Then \(v_c\) is piecewise linear and satisfies all constraints. Moreover, 
\[
\norm{v_c'}_\Lii\le M_c.
\]
Integrating $v_c$ yields $G_c\in C^{1,1}([0,1])$ with
\begin{equation}
    \M^*(a,b,c)\le\norm{G_c''}_\Lii\le M_c.
\label{eq:Mstar upper bound}
\end{equation}
Combining both bounds \eqref{eq:Mstar lower bound} and \eqref{eq:Mstar upper bound}, we have
\[
\M^*(a,b,c)=(\Phi^-)^{-1}(c).
\]
We now explicitly solve for $M_c$.
\begin{itemize}
    \item If \(M_c\ge a+b\), then \(c=\frac{a^2+b^2}{2M_c}\), giving
\[
M_c=\frac{a^2+b^2}{2c},
\]
which corresponds to \(c\le c_0\).

\item If \(M_c\le a+b\), we can solve the quadratic equation $\Phi(M_c) = c$ and obtain
\[
M_c=(a+b-2c)+\sqrt{(a+b-2c)^2+d^2},
\]
which corresponds to $c_0 < c \leq \frac{a+b}{2}$
\end{itemize}

\paragraph{Proof of Case II: when $c > \frac{a+b}{2}$.} Define
\begin{equation}
\phi_M^+(t) := \min\set{a+Mt, b+M(1-t)}.
\end{equation}
In view of \eqref{eq:main v admissible}, we have 
\begin{equation}
v(t) \leq \phi_M^+ \text{\quad for all $t\in[0,1]$.}
\label{eq:vt complete - 2}
\end{equation}
In this case, $\phi_M^+$ consists of only two affine pieces, and they intersect at $t_* = \frac{1}{2} + \frac{b-a}{2M}$ with value $\frac{a+b}{2} + \frac{M}{2}$. Integrating, we have
\begin{equation*}
    \Phi^+(M) := \int_{0}^1\phi_M^+ = \frac{a+b}{2} + \frac{M}{4} - \frac{(b-a)^2}{4M} \geq \int_0^1v = c.
\end{equation*}
One verifies that $\Phi^+$ is increasing in $M$ for $M \geq \abs{b-a}$. By a symmetric argument as above, we can solve for $\Phi^+(M_c) = c$ and obtain
\begin{equation*}
    M_c = (2c-a-b) + \sqrt{(2c-a-b)^2 + (b-a)^2},
\end{equation*}
which occurs when $c \geq \frac{a+b}{2}$.

This completes the proof of Theorem \ref{thm:main - Mstar}.
\end{proof}

    \begin{proof}[Proof of Theorem \ref{thm:explicit complete}]
        Set $v(t):= \phi_M^\pm(t)$ as in \eqref{eq:vt complete} or \eqref{eq:vt complete - 2} depending on the value of $c$. A direct integration in the three separate cases yield the formulae. The rest of the conclusions follow from Theorem \ref{thm:main - Mstar}.
    \end{proof}

\begin{remark}
    \newcommand{\conv}{\mathsf{conv}}
    There is an alternative proof of Theorem \ref{thm:explicit complete} when $c \geq c_0$ which relies on much deeper results from \cite{legruyer2009minimal,azagra2017}. For a real-valued function $g: \R \to \R$, we use $\conv(g)$ to denote the (lower) convex envelope of $g$ defined by
\begin{equation*}
    \conv(g)(t) := \sup\set{h(t): \text{$h$ is convex, proper, lower semicontinuous, and $h\leq g$}}.
\end{equation*}
Rewrite the Hermite data as $\ell_0(t) = at$ and $\ell_1(t) = b(t-1) + c$. It was shown in \cite{azagra2017} that the formula 
\begin{equation}
    F = - \frac{M}{2}t^2 + \conv\brac{
    \inf_{k = 0,1}\set{
    \ell_k(k) + \ell_k'(t - k) + \frac{M}{2}\abs{t-k}^2
    } + \frac{M}{2}t^2
    }
    \label{eq:azagra formula}
\end{equation}
gives a $C^{1,1}$ interpolant matching the Hermite data with $\Lip(G') \leq M$ whenever $M \geq \M^*(a,b,c)$.
Moreover, it was shown in \cite{legruyer2009minimal} that $\Mloc(a,b,c)$ (when $c \geq c_0$) is the \emph{best possible} Lipschitz bound for the derivatives of any $C^{1,1}$ interpolants matching the Hermite data, although monotonicity was not a consideration. After an elementary but tedious calculation to find a tangent line to two parabolas, we can verify that the exact same formula \eqref{eq:azagra formula} give rise to the same $G(t)$ in Theorem \ref{thm:explicit complete} when $c \geq c_0$. However, the formula does not preserve monotonicity when $c < c_0$.
\end{remark}

\subsection{Near-optimal $C^2$-smoothing}
\label{sect:C2 smoothing}

In this subsection, we smooth the $C^{1,1}$ solution in Theorem \ref{thm:explicit complete} into a $C^2$ solution sacrificing a small amount of optimality. 

\begin{theorem}[Near-optimal $C^2$-smoothing]\label{thm:C2 mollify}
    Let $a,b,c \in [0,\infty)$ with $c > 0$. There exists an explicit admissible interpolant \(G\in C^2([0,1])\) such that
\[
G(0)=0,\qquad G(1)=c,\qquad G'(0)=a,\qquad G'(1)=b,\qquad G'\ge 0,
\]
and
\[
\norm{G''}_\Lii\le (1+c_0)\M^*(a,b,c).
\]
Moreover, we may take $c_0 = 0.2$. 

\end{theorem}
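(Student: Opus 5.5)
The plan is to smooth the velocity $v=G'$ of the optimal interpolant from Theorem \ref{thm:explicit complete}, not $G$ itself. In every case $v=\phi_M^\pm$ is piecewise linear with slopes in $\{-M,0,M\}$, where $M=\Mloc(a,b,c)$. We need $\widetilde v\in C^1([0,1])$ satisfying
\begin{equation*}
\widetilde v(0)=a,\qquad \widetilde v(1)=b,\qquad \widetilde v\ge 0,\qquad \int_0^1\widetilde v=c,\qquad \norm{\widetilde v'}_\Lii\le 1.2M .
\end{equation*}
Then $G(t):=\int_0^t\widetilde v$ is the desired $C^2$ interpolant. We may assume $M>0$, since otherwise $v$ is constant and already smooth.

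The basic tool is a \emph{fillet}. We steepen the slopes from $\pm M$ to $\pm M'$ with $M'=1.2M$. Then near each corner $t_*$ we replace the two lines on $[t_*-r,t_*+r]$ by the unique quadratic that matches the value and slope of each line at $t_*\mp r$. On the fillet $\widetilde v'$ interpolates linearly between the two adjacent slopes, so $\norm{\widetilde v'}_\Lii\le M'$ and $\widetilde v\in C^1$. The key geometric fact is that a fillet lies strictly between the two lines. For a peak it lies below the minimum of the lines; for a valley, and for the two corners $\tau_1,\tau_2$ of the three-piece profile $\phi^-$, it lies above the maximum of the lines. So positivity is preserved in the valley and plateau cases, where the fillet lies above $\phi^-\ge 0$. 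In the peak case the fillet is concave and lies above the chord joining its endpoints, so it stays $\ge\min\{a,b\}\ge 0$. Fillets away from the endpoints $0,1$ leave $\widetilde v(0)=a$ and $\widetilde v(1)=b$ unchanged.

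The mass is balanced by a monotonicity and intermediate-value argument, treated in three cases.
\begin{itemize}
\item In Case II ($c>\frac{a+b}{2}$) we use the two-piece profile $\phi^+_{M'}$. Since $\Phi^+$ is increasing, $\Phi^+(M')>\Phi^+(M)=c$. The peak fillet removes mass that grows continuously from $0$ as $r$ increases, and we choose $r$ so that the mass is exactly $c$.
\item In the valley case $c_0\le c\le\frac{a+b}{2}$ we argue symmetrically. Here $\Phi^-(M')<c$ because $\Phi^-$ is decreasing, and the valley fillet adds mass.
\item In the case $c<c_0$ the plateau profile $\phi^-_{M'}$ has mass $\frac{a^2+b^2}{2M'}<c$. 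We fillet both corners $\tau_1'=a/M'$ and $\tau_2'=1-b/M'$, each with width $r$, and increase $r$ until the mass is $c$.
\end{itemize}
An explicit computation gives the mass change of a fillet between slopes $s_1,s_2$: it is $\tfrac13|s_1-s_2|r^2$, so the equation for $r$ is an explicit quadratic.

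The main obstacle, and the reason for the constant $0.2$, is to check that the required $r$ is small enough for the fillets to fit. Each fillet must stay inside $(0,1)$ and inside its adjacent linear pieces, and the two fillets in the plateau case must not overlap. The mass deficit created by steepening is explicit. Take for example Case II, with peak at $t_*'$ of $\phi^+_{M'}$. The deficit is $\Phi^+(M')-\Phi^+(M)$, and since the slopes differ by $2M'$ it must equal $\tfrac23M'r^2$. We must compare the resulting $r$ with the available room $\min\{t_*',1-t_*'\}$. I would first show that $\Phi^\pm(M')-\Phi^\pm(M)$ is at most a fixed fraction of $M'\min\{t_*',1-t_*'\}^2$ when $M'/M=1.2$, using the explicit formulas from the proof of Theorem \ref{thm:main - Mstar}. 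The analogous bound is needed in terms of $a/M'$ and $b/M'$ for the plateau case, and in terms of $t_*'$ for the valley case. If the fillets fail to fit in degenerate corners of parameter space (for example $a$ or $b$ near $0$, where a linear piece is very short), I would move the fillet to be one-sided: let it absorb the boundary slope at $t=0$ or $t=1$ while keeping $\widetilde v(0)=a$. Alternatively, in that regime I would increase $M'$ only on the long piece. The mass equation is then re-solved by the same intermediate-value argument.
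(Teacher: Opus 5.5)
\textbf{The gap is in the fitting step, the one you flag as the main obstacle.} With the steepening factor fixed at $M'=1.2M$, the fillet needed to restore the mass generally does not fit. This happens on open, non-degenerate regions of parameter space, not only when $a$ or $b$ is near $0$.

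First a correction: a quadratic fillet of half-width $r$ at a corner with slope jump $\abs{s_1-s_2}$ changes the mass by $\frac16\abs{s_1-s_2}r^2$, not $\frac13\abs{s_1-s_2}r^2$. Compare the paper's $E_\delta=\frac{M\delta^2}{6}$ for a jump of $M$. This correction makes the fitting problem worse.

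Now take Case II with $a\ne b$ and let $c\downarrow\frac{a+b}{2}$, so that $M\downarrow\abs{b-a}$.
\begin{itemize}
\item The excess is $\Phi^+(M')-\Phi^+(M)=\frac{M'-M}{4}\bigl(1+\frac{(b-a)^2}{MM'}\bigr)\to\frac{11}{120}\abs{b-a}$.
\item The room is $\min\{t_*',1-t_*'\}=\frac12-\frac{\abs{b-a}}{2M'}\to\frac1{12}$.
\item So a peak fillet that fits removes at most $\frac{M'}{3}\cdot\frac1{144}\to\frac{\abs{b-a}}{360}$. This is short by a factor of about $33$, and about $16$ even with your constant.
\end{itemize}
Hence the inequality you plan to prove, that the deficit is at most $\frac13M'\min\{t_*',1-t_*'\}^2$, is false. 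The same computation shows it already fails once $\abs{b-a}/M$ exceeds roughly $0.32$, for example at $a=1$, $b=3$, $c=2.5$, where nothing is degenerate. The valley case gives the same numbers as long as $M'\le a+b$.

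The plateau case fails too. Take $a=b=1$ and $c=0.45<c_0=0.5$. Then $\tau_1'=0.375$, $\tau_2'=0.625$, and the missing mass is $c-\frac{a^2+b^2}{2M'}=\frac c6=0.075$. Two fillets need $r\approx0.29$ (about $0.21$ with your constant), so they overlap, since $2r>\tau_2'-\tau_1'=0.25$.

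\textbf{Why the fallbacks do not help, and what the paper does instead.} Your fallbacks (one-sided fillets, steepening only the long piece) are aimed at degenerate corners and are not carried out, so they do not cover these cases. The root problem is a mismatch of scales. Steepening by a fixed factor changes the mass by a global amount of order $M'-M$. A fillet confined to the adjacent linear pieces can only change it by $O(M'\cdot\mathrm{room}^2)$, and the room can be small independently of $M$.

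The paper avoids any fitting issue by never making a global change:
\begin{itemize}
\item It fillets $\phi_M^\pm$ itself, which already has mass exactly $c$, at an arbitrarily small scale $\delta$. This produces a mass error $E_\delta\le\frac13M\delta^2$.
\item It cancels that error with a bump $E_\delta\eta_\delta$ of width $4\delta$ placed on the last linear piece.
\item By scaling, the extra slope $E_\delta\norm{\eta_\delta'}_\Lii\le\frac{10}{48\sqrt3}M<0.2M$ does not depend on $\delta$, so nothing has to fit.
\end{itemize}

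Your steepening idea can also be repaired without any fitting. Since $\Phi^-(M')<c<\Phi^+(M')$ strictly, consider small-fillet $C^1$ smoothings of $\phi^-_{M'}$ and $\phi^+_{M'}$; for small enough fillets their masses still bracket $c$. A suitable convex combination of the two is nonnegative, takes the values $a,b$ at the endpoints, has slope at most $M'$, and has mass exactly $c$.
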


\begin{proof}
    Let $M := \Mloc(a,b,c)$. 
    Let 
    \begin{equation*}
        v_M(t) := \phi_M^\pm(t)
    \end{equation*}
    with $\phi_M^\pm$ as in \eqref{eq:vt complete} or \eqref{eq:vt complete - 2}, depending on the value of $c$. We will prove the case $v_M(t) = \phi_M^-(t)$, as the proof of the other case is almost identical. In this case,
    \begin{equation*}
    v_M(t) = \max\set{0, a-Mt, b-M(1-t)}.
    \end{equation*}
     We will mollify \(v_M\) and integrate the result to produce an admissible \(C^2\) interpolant with curvature bounded by \((1+c_0)M\), 
    
    Note that $v_M$ is piecewise affine with slopes in \(\{-M,0,M\}\) and its graph has at most two corners. Let \(\Sigma\subset(0,1)\) denote the $x$-coordinates of the corners. 
Choose \(\delta > 0\) such that
\begin{equation}\label{eq:delta-choice-unified-final}
\delta\le \min\Biggl\{
\frac12 \inf_{\tau\in\Sigma}\min\{\tau,1-\tau\},
\frac14 \min_{\substack{\tau,\tau'\in\Sigma\\ \tau\neq\tau'}}|\tau-\tau'|,
\frac{b}{16M}
\Biggr\},
\end{equation}
with the convention that empty minima are omitted.
Then the intervals
\[
[\tau-\delta,\tau+\delta],\qquad \tau\in\Sigma,
\]
are pairwise disjoint, lie inside \((0,1)\), and are disjoint from
\[
J_\delta:=[1-6\delta,\,1-2\delta].
\]

Fix \(\tau\in\Sigma\) and let \(s_-\) and \(s_+\) be the left and right slopes of \(v_M\) at \(\tau\), i.e., $s_\pm = v_M'(\tau \pm)$.
Since \(v_M\) is a lower-envelope profile, its slope is nondecreasing across corners, so
\[
(s_-,s_+)\in\{(-M,0),\ (0,M),\ (-M,M)\}.
\]
In particular, \(s_-\le s_+\).
Define on \([\tau-\delta,\tau+\delta]\)
\begin{equation}\label{eq:quadratic-splice-final}
q_\tau(t):=
v_M(\tau-\delta)
+s_-\,(t-(\tau-\delta))
+\frac{s_+-s_-}{4\delta}(t-(\tau-\delta))^2.
\end{equation}
Differentiating \eqref{eq:quadratic-splice-final}, we obtain
\begin{equation*}
    q_\tau'(t)=s_-+\frac{s_+-s_-}{2\delta}(t-(\tau-\delta)).
\end{equation*}
Evaluating at \(t=\tau-\delta\) and \(t=\tau+\delta\), we see that
\begin{equation*}
    q_\tau'(\tau-\delta)=s_- \text{\quad and \quad} q_\tau'(\tau+\delta)=s_-+\frac{s_+-s_-}{2\delta}(2\delta)=s_+.
\end{equation*}
Similarly,
\begin{equation*}
  q_\tau(\tau-\delta)=v_M(\tau-\delta)  \text{\quad and \quad} q_\tau(\tau+\delta) = v_M(\tau-\delta)+\delta(s_-+s_+) = v_M(\tau+\delta).
\end{equation*}
The last equality holds because the left and right affine branches of \(v_M\) meet at \(\tau\), and the value change from \(\tau-\delta\) to \(\tau+\delta\) is exactly \(\delta(s_-+s_+)\).
therefore, \(q_\tau\) matches both value and first derivative of the adjacent affine pieces at the endpoints.

Define
\[
w_\delta(t):=
\begin{cases}
q_\tau(t), & t\in[\tau-\delta,\tau+\delta]\text{ for some }\tau\in\Sigma,\\[0.4em]
v_M(t), & \text{otherwise}.
\end{cases}
\]
Then \(w_\delta\in C^1([0,1])\), and
\[
w_\delta(0)=a,\qquad w_\delta(1)=b.
\]

Next, we verify that
\begin{equation}\label{eq:wdelta-properties-final}
w_\delta\ge v_M\ge 0
\text{\quad and \quad}
\norm{w_\delta'}_\Lii\le M.
\end{equation}
On each smoothing interval, \(q_\tau'\) varies affinely between \(s_-\) and \(s_+\), and both satisfy
\[
|s_-|\le M,\qquad |s_+|\le M.
\]
Hence
\[
|q_\tau'(t)|\le M
\qquad\text{for all }t\in[\tau-\delta,\tau+\delta].
\]
Outside the smoothing intervals, \(w_\delta'=v_M'\in\{-M,0,M\}\), so
\[
\norm{w_\delta'}_\Lii\le M.
\]

To prove \(w_\delta\ge v_M\), first note that on the left half of a smoothing interval, \(v_M\) agrees with the affine function
\[
L_\tau(t):=v_M(\tau-\delta)+s_-(t-(\tau-\delta)).
\]
Subtracting, we obtain
\[
q_\tau(t)-L_\tau(t)
=
\frac{s_+-s_-}{4\delta}(t-(\tau-\delta))^2\ge 0.
\]
Thus \(q_\tau\ge v_M\) on the left half.
On the right half, \(v_M\) agrees with the affine function tangent to \(q_\tau\) at \(t=\tau+\delta\).
Since \(q_\tau\) is convex, it lies above its tangent line there. Hence \(q_\tau\ge v_M\) on the right half as well.
Therefore \(w_\delta\ge v_M\ge 0\) on all of \([0,1]\).

Since we have modified the velocity, we have to correct the excess area of error. Set
\[
E_\delta:=\int_0^1 w_\delta(t)\,dt-c
=\int_0^1 (w_\delta(t)-v_M(t))\,dt.
\]
Then \(E_\delta\ge 0\).
We now estimate this excess area.
\begin{itemize}
    \item If \((s_-,s_+)=(-M,0)\), then on the smoothing interval,
\[
q_\tau(t)-v_M(t)=\frac{M}{4\delta}(t-(\tau-\delta))^2
\]
on the left half, and an analogous quadratic expression on the right half.
Integrating over the entire smoothing interval gives $E_\delta = \frac{M\delta^2}{6}$. 
The same value is obtained when \((s_-,s_+)=(0,M)\).

\item If \((s_-,s_+)=(-M,M)\), then the coefficient doubles, and the corresponding area gain $E_\delta = \frac{M\delta^2}{3}$.
\end{itemize}

Since \(v_M\) has at most two corners, and the only possibilities are either two one-sided corners or one two-sided corner, we obtain the uniform estimate
\begin{equation}\label{eq:Edelta-bound-final}
0\le E_\delta\le \frac13 M\delta^2.
\end{equation}

We now restore the correct area by subtracting a smooth bump.
Define
\[
B(s):=
\begin{cases}
30s^2(1-s)^2, & 0\le s\le 1,\\
0, & \text{otherwise}.
\end{cases}
\]
Then \(B\in C^1(\mathbb R)\), \(B\ge 0\), and $\int_{0}^1B = 1$.
Define
\[
\eta_\delta(t):=\frac{1}{4\delta}\,
B\!\left(\frac{t-(1-6\delta)}{4\delta}\right).
\]
Then \(\eta_\delta\in C^1([0,1])\), \(\eta_\delta\ge 0\),
\[
\operatorname{supp}(\eta_\delta)\subset J_\delta,
\qquad
\int_0^1 \eta_\delta(t)\,dt=1.
\]
Finally, we set
\[
v_\delta(t):=w_\delta(t)-E_\delta\,\eta_\delta(t).
\]
Since \(\eta_\delta\) is supported away from the endpoints, $v_\delta(0)=a$, $v_\delta(1)=b$, and
\begin{equation}
    \int_0^1 v_\delta(t)\,dt
=
\int_0^1 w_\delta(t)\,dt-E_\delta\int_0^1 \eta_\delta(t)\,dt
=
(c+E_\delta)-E_\delta
=
c.
\end{equation}

It remains to verify $v_\delta \geq 0$.
On \(J_\delta=[1-6\delta,1-2\delta]\), the function \(v_M\) lies on its final affine branch, so
\[
v_M(t)=b-M(1-t)\ge b-6M\delta.
\]
Using \(\delta\le b/(16M)\), we obtain
\[
v_M(t)\ge b-\frac{6}{16}b=\frac58 b.
\]
Since \(w_\delta\ge v_M\), we have
\[
w_\delta(t)\ge \frac58 b
\qquad\text{on }J_\delta.
\]

Next, since
\[
\sup_{s\in\mathbb R} B(s)=\sup_{0\le s\le 1} 30s^2(1-s)^2
=
30\left(\frac14\right)^2
=
\frac{30}{16},
\]
we obtain
\[
\norm{\eta_\delta}_\Lii\le \frac{30}{64\delta}.
\]
Using \eqref{eq:Edelta-bound-final}, we have
\[
E_\delta \cdot \sup \eta_\delta
\le \frac13 M\delta^2\cdot \frac{30}{64\delta}
=
\frac{5}{32}M\delta
\le \frac{5}{512}b
<\frac18 b.
\]
Therefore, on \(J_\delta\), $v_\delta(t)\ge \frac58 b-\frac18 b=\frac12 b>0$. Outside \(J_\delta\), we have \(\eta_\delta=0\), so \(v_\delta=w_\delta\ge 0\).
Thus
\[
v_\delta\ge 0
\qquad\text{on }[0,1].
\]

Finally, we estimate the derivative. Since $v_\delta'=w_\delta'-E_\delta\eta_\delta'$, we obtain
\[
\norm{v_\delta'}_\Lii
\le \norm{w_\delta'}_\Lii + E_\delta \norm{\eta_\delta'}_\Lii.
\]
Differentiating \(\eta_\delta\), we obtain
\[
\eta_\delta'(t)
=
\frac{1}{16\delta^2}
B'\!\left(\frac{t-(1-6\delta)}{4\delta}\right).
\]
Thus
\[
\norm{\eta_\delta'}_\Lii
\le \frac{1}{16\delta^2}\norm{B'}_\Lii.
\]
Since $B'(s)=60s(1-s)(1-2s)$, we can verify that $\norm{B'}_\Lii=\frac{10}{\sqrt3}$, so
\[
\norm{\eta_\delta'}_\Lii<\frac{10}{16\sqrt{3}\delta^2}.
\]
Combining this with \eqref{eq:wdelta-properties-final} and \eqref{eq:Edelta-bound-final}, we obtain
\begin{equation}
\norm{v_\delta'}_\Lii
\le M+\frac13 M\delta^2\cdot \frac{10}{16\sqrt{3}\delta^2}
\leq 1.2 M.
    \label{eq:v_delta estimate}
\end{equation}
Define
\[
G(x):=\int_0^x v_\delta(t)\,dt.
\]
Since \(v_\delta\in C^1([0,1])\), we have \(G\in C^2([0,1])\). Moreover, $G(0) = 0$, $G(1) = \int_{0}^1 v_\delta = c$, $G'(0)=v_\delta(0)=a$, $G'(1)=v_\delta(1)=b$, and 
\[
G'(x)=v_\delta(x)\ge 0
\qquad\text{for all }x\in[0,1].
\]
Finally, thanks to \eqref{eq:v_delta estimate},
\[
\norm{G''}_\Lii
=
\norm{v_\delta'}_\Lii
<1.2M
=
1.2\Mloc(a,b,c).
\]
This completes the proof.
\end{proof}

\section{Numerical examples}

We implemented all four interpolation methods in this paper for the two-point setting, and we implemented the velocity-based method to handle $C^{1,1}$ monotone Hermite interpolation problem of arbitrary (finite) size. The code used to produce these figures is available on our GitHub repository
\begin{center}
    \url{https://github.com/jfsmath/Monotone-Interpolation}.
\end{center}

In Figures \ref{fig:average c} and \ref{fig:average c large b-a}, we compare all four methods in this paper with data $c \approx \frac{a+b}{2}$. In Figure \ref{fig:average c}, we also have $a \approx b$, and monotonicity is preserve across all four methods. In Figure \ref{fig:average c large b-a}, we have $a \gg b$, and the classical Whitney extension operator (Theorem \ref{thm:whitney extension operator}) fails to preserve monotonicity.

\begin{figure}[htbp]
    \centering
    \begin{subfigure}{\linewidth}
        \includegraphics[width=\linewidth]{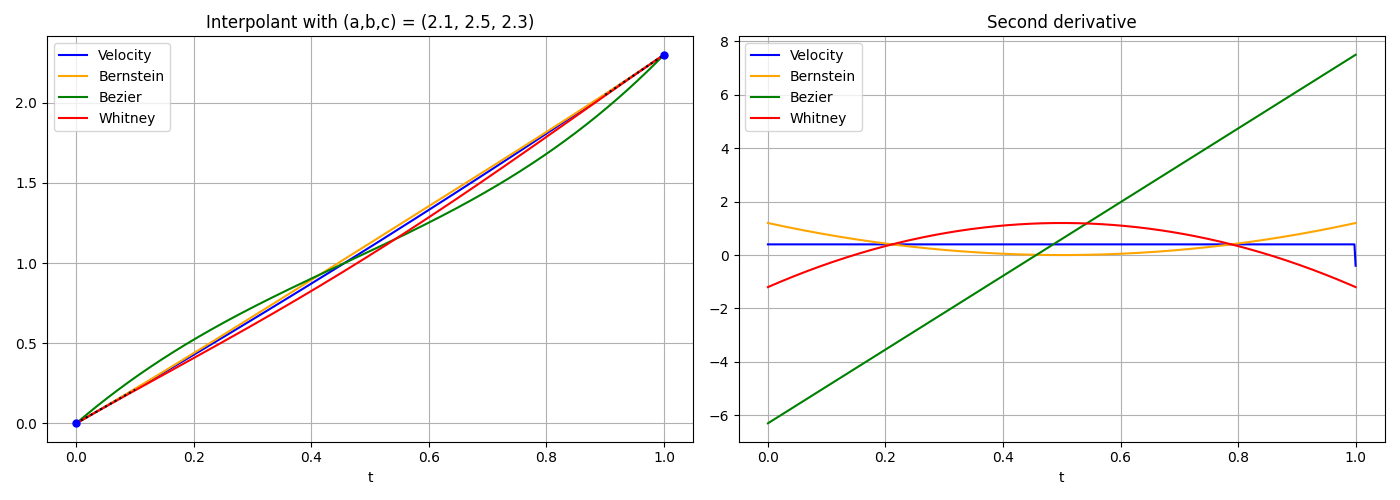}
    \caption{$a \approx b \approx c$}
    \label{fig:average c}
    \end{subfigure}
    \\[1em]
    \begin{subfigure}{\linewidth}
        \includegraphics[width=\linewidth]{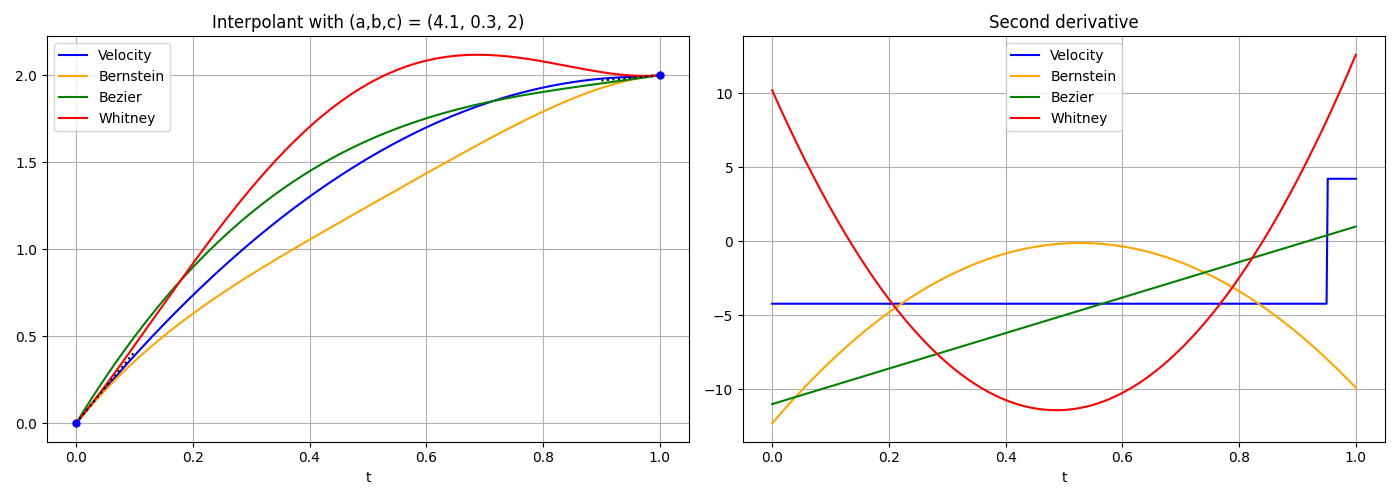}
    \caption{$a \gg b$}
    \label{fig:average c large b-a}
    \end{subfigure}
    \caption{Method comparison when $c \approx \frac{a+b}{2}$}
\end{figure}

In Figures \ref{fig:large c no bezier} and \ref{fig:large c no bezier}, we consider the case $c \gg \frac{a+b}{2}$. In this case, both the Whitney and the Bernstein methods preserve monotonicity. The \Bezier curve is ill-behaved so it is not included in the plot.  

\begin{figure}[htbp]
    \centering
    \includegraphics[width=\linewidth]{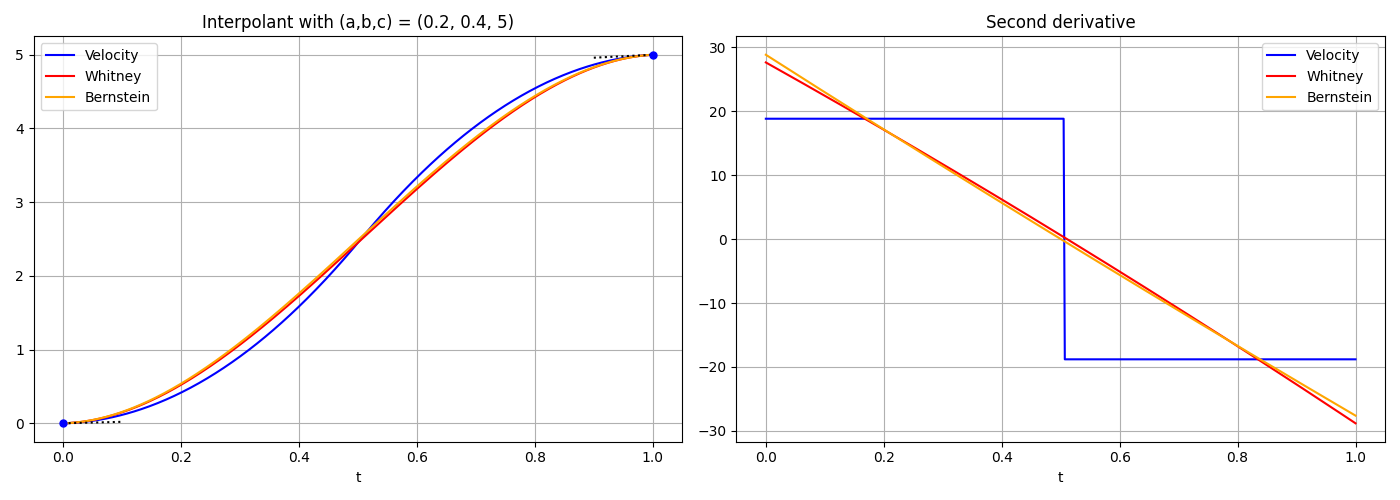}
    \caption{Method comparison when $c \gg \frac{a+b}{2}$}
    \label{fig:large c no bezier}
    \end{figure}

In Figure \ref{fig:small c, no bezier}, we consider the case $c \ll \frac{a+b}{2}$. In this case, neither the Whitney nor the Bernstein method preserves monotonicity. The \Bezier method is again ill-behaved so we do not include it in the plot. 

\begin{figure}[htbp]
    \centering
    \includegraphics[width=\linewidth]{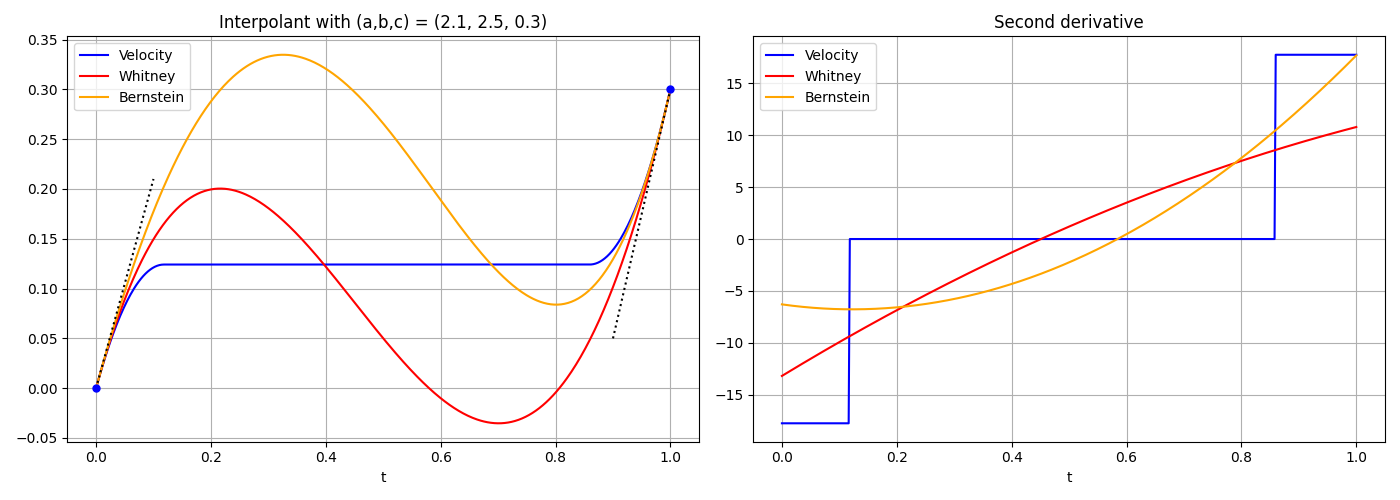}
    \caption{Method comparison when $c \ll \frac{a+b}{2}$}
    \label{fig:small c, no bezier}
\end{figure}

In Figure \ref{fig:random}, we apply the two-point velocity-based method to treat monotone Hermite interpolation problem with ten random nodes. This is based on Theorem \ref{thm:explicit complete} and the discussion in Section \ref{sect:reduction}.

\begin{figure}[htbp]
    \centering
    \includegraphics[width=\linewidth]{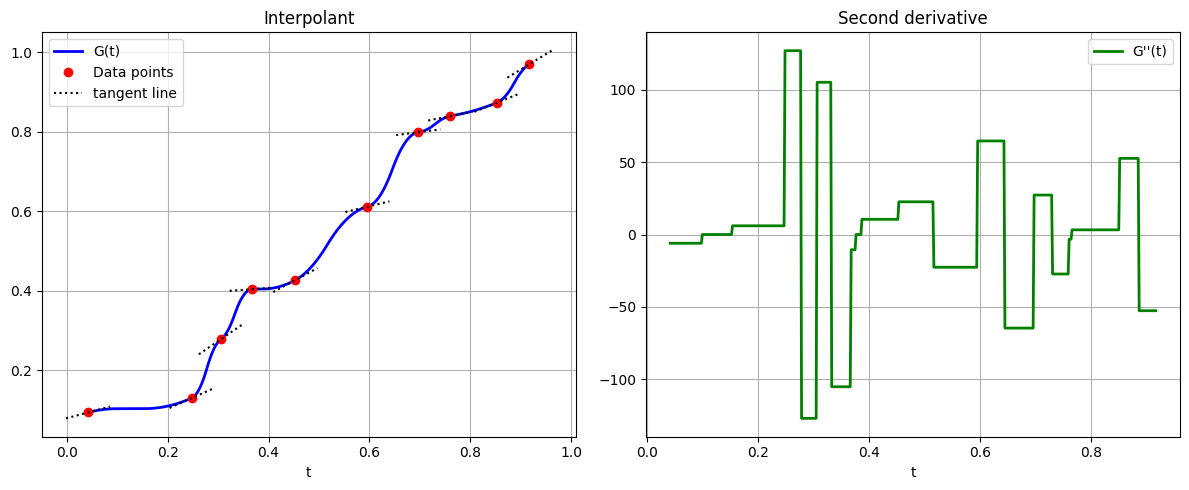}
    \caption{Interpolating five random data points with given slopes}
    \label{fig:random}
\end{figure}

\newpage
\bibliographystyle{plain}
\bibliography{monotone, whitney}

\end{document}